\newtheorem{prop}{Proposition}[section]
\newtheorem{theo}[prop]{Theorem}
\newtheorem{cor}[prop]{Corollary}
\newtheorem{defn}[prop]{Definition}
\newtheorem{rem}[prop]{Remark}
\newtheorem{rems}[prop]{Remarks}
\newtheorem{lem}[prop]{Lemma}
\newenvironment{proof}
{\begin{trivlist} \item[\hskip \labelsep {\bf Proof}\hspace*{3 mm}]}
	{\hfill$\Box$\end{trivlist}}
\begin{document}

	
\title{On the geometry of holomorphic curves and complex surface}
\author{Amanda Dias Falqueto and Farid Tari}

\maketitle
\begin{abstract}
We investigate the geometry of holomorphic curves and complex surfaces from a singularity theory viewpoint.
We show that with the choice of the holomorphic metric, the families of functions and mappings that measure the contact between curves or surfaces with model objects are holomorphic. This allows the application of singularity theory techniques to pick up the geometry that is invariant under translations and complex rotations.
\end{abstract}
	
\noindent {\small {\bf Key--words}: holomorphic curves, complex surfaces, contact, curvature, evolute, singularities.}
	
\noindent {\small {\bf 2020 Mathematics Subject Classification:} 57R45, 53B99.}
	
\noindent \makebox [40mm]{\hrulefill}

\section{Introduction}\label{sec:intro}

Singularity theory made extensive and deep contributions to the local differential geometry of submanifolds of the Euclidean and Minkowski spaces (see, for example, \cite{BG,Cipolla_Gilin,Damon_Giblin_Haslinger,IRRT,Izu,Porteous}). The geometry of the submanifold is captured by its contact with degenerate objects, such as those with zero or constant Gaussian curvature.
The contact is given by the singularity type of some mappings on the surface. These mappings come in natural families that generally are versal deformations, so they yield important properties of the submanifold. 

Our aim in this paper is to show how this singularity theory approach can be used to study the local differential geometry of holomorphic curves in $\mathbb C^2$ and $\mathbb C^3$, and of complex surfaces in $\mathbb C^3$. This, we believe, paves the way for the study of complex submanifolds in $\mathbb C^n$ from a singularity theory viewpoint.

Usually, the metric induced by the Hermitian inner product is used when studying geometric properties of submanifolds that are invariant under the action of the unitary group on $\mathbb C^n$ (see, for example, \cite{Nomizu&Smyth}). 
However, with this inner product, the families of functions and mappings of interest are not holomorphic. 
For instance, the Gauss map of a hypersurface is not holomorphic, and the family of height functions and orthogonal projections are also not holomorphic. Therefore, the results of singularity theory
cannot be used to capture the geometry of submanifolds of $\mathbb C^n$ derived from the Hermitian inner product.

Our key observation is to use the holomorphic inner product 
\begin{equation}\label{eq:holomorphicscprod}
\langle v,w\rangle =\sum_{i=1}^{n}v_iw_i,
\end{equation}
which turns $\mathbb C^n$ into a holomorphic Riemannian manifold with metric $ds^2=\sum_{i=1}^{n}dz_i^2$. Its isometry group is given by $E(n,\mathbb C)=
\mathbb C^n\rtimes O(n,\mathbb C)$, which is the semi-direct product of the group of translations with the complex orthogonal group.
More details on this holomorphic metric can be found in \cite{PessersVeken}. The authors in \cite{MetrixAlgGeom} also make use of the holomorphic metric to study metric problems in $\mathbb R^n$ using methods from complex algebraic geometry.

It is worth observing that there are non-zero vectors $v\in \mathbb C^n$ with zero length, that is $\langle v,v\rangle=0$. Such vectors are called isotropic vectors. Isotropic curves are curves with isotropic tangent vectors at all points. 
In the case $n=2$, isotropic curves are lines, and through each point pass two such lines. In the case $n=3$, there are isotropic curves that are not lines; see \cite[Section 1.12]{Struik}.  Struik also provided a classification of isotropic surfaces, that is, those whose tangent planes are isotropic at all points (equivalently, whose normal vector is isotropic). These surfaces are isotropic planes, isotropic cylinders, isotropic cones, or tangent surfaces to an isotropic curve \cite[Section 5.6]{Struik}.

Our work is set within the framework of generic geometry, which studies generic properties of submanifolds.
For generic regular holomorphic curves in $\mathbb C^n$, with $n=2,3$, which we deal with in Sections \ref{sec:planecurves} and \ref{sec:spacecurves}, we expect the points with isotropic tangent vectors to be isolated points along the curve. We call such points isotropic points. On the other hand, for generic surfaces in $\mathbb C^3$ (see \S\ref{sec:surfaces}), the points where the tangent planes are isotropic form a regular curve, which we call the isotropic locus of the surface.

The concepts of inflection points, vertices, and the evolute of a plane curve can be defined in terms of the contact of the curve with lines and circles. The holomorphic inner product \eqref{eq:holomorphicscprod} allows us to define the curvature of a curve and derive from it the concepts mentioned above (see Section~\ref{sec:spacecurves}). For space curves, we also introduce the notion of torsion, together with the Frenet–Serret frame and its corresponding differential equations (\S\ref{sec:spacecurves}).

We deal in \S \ref{sec:surfaces} with complex surfaces in $\mathbb C^3$. 
We define the Gauss map and show that its derivative is a symmetric operator. 
The concepts of the first and second fundamental forms can also be defined for such surfaces, and with that, all the concepts on the geometry of surfaces apply (see \cite{PessersVeken, Struik}).
For both the curve and surface cases, we define the families of height functions, distance squared  functions and orthogonal projections. All these families are holomorphic and singularity theory results can be used to derive geometric information about the curve and surface.

In \S\ref{sec:planecurves}, we show that the envelope of the normal lines to a plane curve, with respect to the Hermitian inner product, is empty, whereas the envelope of the normal lines, with respect to the holomorphic metric, is not (Theorem \ref{theo:EmptyHermeEnv}). This shows that the holomorphic metric is the natural choice for our study.
The envelope of the normal lines is precisely the evolute of the curve. We show that the evolute extends smoothly to a regular curve at isotropic points and is tangent to the original curve at those points.
On the other hand, the focal set of a surface in $\mathbb C^3$ extends smoothly to a regular surface along the isotropic locus and that it is tangent to the original surface along this curve.

It is worth emphasising that our study is not merely concerned with the complexification of real-analytic curves and surfaces. First, not all holomorphic curves and complex surfaces arise in this way. Second, real curves and surfaces do not possess isotropic points; consequently, phenomena such as the behaviour of the evolute and the focal set at isotropic points have no analogue in the real setting.

In \cite{JoitasSiersmaTibar}, the authors studied the local component of the bifurcation set of the distance-squared function on algebraic curves 
$X\subset \mathbb C^2$, including at points at infinity, called the 
\textit{ED discriminant} of 
$X$. They showed that this discriminant contains the evolute of 
$X$, as well as additional components that do not appear in the real case.

Regarding the generic singularities that arise in the families of functions and mappings considered here, these are determined by the dimensions of the corresponding parameter spaces. As a result, the types of singularities occurring in the complex setting are similar to those appearing in the real case.

Finally, when computing invariants of analytic curves and surfaces in Euclidean space, such as the number of inflection points and vertices accumulated at a curve singularity, or the number of umbilics accumulated at a degenerate umbilic point, one complexifies in order to obtain an upper bound for their real counterparts. The approach adopted in this paper assigns geometric meaning to the resulting complexified objects through the curvatures introduced here.

\section{Preliminaries}\label{sec:prel}
We consider the holomorphic inner product in $\mathbb C^n$ defined in (1), which is complex bilinear, symmetric and non-degenerate form.

The holomorphic inner product (1) has a property similar to that of the Lorentzian inner product: there are non-zero vectors that satisfy  $\langle v,v\rangle=0$. These vectors are called {\bf isotropic} (\cite{Struik}).

Two non-zero vectors $v$ and $w$ are said to be {\bf orthogonal} if $\langle v,w\rangle=0$.

A hyperplane $\pi$ in $\mathbb C^n$ through some point $p_0$ has equation $\langle z-p_0,v\rangle=0$. The vector $v$ is called its normal vector. The hyperplane is {\bf isotropic} if its normal vector is isotropic. In that case, $v$ is parallel to $\pi$.

The {\bf length of a vector} $v\in \mathbb C^n$ is defined as $\langle v,v\rangle^\frac12$. This is a bi-valued set as it depends on the choice of the branch of the square root, that is,
$$
\langle v,v\rangle^\frac12=\left\{ 
| \langle v,v\rangle|^\frac12 e^{i \frac{\arg_{[0, 2\pi)} \langle v,v\rangle}2},\,\,
| \langle v,v\rangle|^\frac12 e^{i \frac{\arg_{(-\pi, \pi]} \langle v,v\rangle}2} 
\right\},
$$
where $\arg_I$ means the choice of the argument in the interval $I$. 

The case $I=(-\pi, \pi]$ 
is called the principal branch of the square root function, and the case $I=[0,2\pi)$ here is called the 
``other branch''. 

Let $C$ be a holomorphic curve in $\mathbb C^n$ parametrised locally by $\gamma: D\to \mathbb C^n$, where $\gamma$ is a holomorphic function and $D$ is a connected and simply connected open set.  Suppose that $\gamma'(t)$ is not an isotropic vector for all $t\in D$, and let 
\begin{align}\label{setB}
\mathcal{B}^- = \left\{ t \in D \; \middle| \; \operatorname{Re} (\langle \gamma'(t), {\gamma'(t)} \rangle ) < 0 \right\},\quad 
\mathcal{B}^+ = \left\{ t \in D \; \middle| \; \operatorname{Re} (\langle \gamma'(t), {\gamma'(t)} \rangle ) > 0 \right\}.
\end{align}

As $\gamma$ is holomorphic, $\mathcal{B}^{\pm}$ is a real semi-analytic subset of $D$, in particular, it has measure zero for generic curves.

To make the length of $\gamma'(t)$ in \S\ref{sec:planecurves} and \S\ref{sec:spacecurves} a single-valued function, 
we use the principal branch of the square root when $t\in D\setminus \mathcal{B}^-$ and the other branch when $t\in D\setminus \mathcal{B}^+$. We show that the geometric features derived from the functions used here are independent of the choice of branch of the square root.

A complex sphere of centre $p$ and radius $r\in \mathbb C$, $r\ne 0$, is defined as the set of points $z\in \mathbb C^n$ that satisfy
$$
\langle z-p, {z-p}\rangle=r^2.
$$  

The complex unit sphere $\mathbb CS^{n-1}$ is the set 
$$
\mathbb CS^{n-1}=\{z\in \mathbb C^n: \langle z, {z}\rangle=1 \},
$$
so every point in $\mathbb CS^{n-1}$ represents a unit vector.

When $n=2$, $\mathbb CS^{1}$ is the complex unit circle and has 
equation 
$
z_1^2+z_2^2=1,
$ 
 where $z=(z_1,z_2)\in \mathbb C^2$.
Complex circles are conics that passes through 
the circular points $(1:i:0)$  and $(1:-i:0)$ at infinity in $\mathbb{CP}^2$. These 
represent the two lines
$z_2=\pm iz_1$ in the affine plane $\mathbb C^2\subset \mathbb{CP}^2$.
The tangent vectors to these lines are along the isotropic vectors in $\mathbb C^2$.

We use the usual singularity theory concepts and notation about the actions of the Mather groups on the set of germs of holomorphic mappings (see, for example, \cite{wallsurvey}). 

Two germs of families of holomorphic functions $F,G: (\mathbb C^n\times \mathbb C^m,(0,0))\to (\mathbb C,0 )$
are said to be $\mathcal R^+$-equivalent if 
$$
G(z,v)=F(\phi(z,v),\psi(v))+c(v),
$$ 
where $(\phi,\psi):(\mathbb C^n\times \mathbb C^m,(0,0))\to (\mathbb C^n\times \mathbb C^m,(0,0))$ is a germ of a holomorphic diffeomorphism and $c:(\mathbb C^m,0)\to (\mathbb C,0)$ is a germ of a holomorphic function.

\section{Plane curves}\label{sec:planecurves}

We define some basic concepts of regular holomorphic curves $C$, including curvature, vertices and evolute of such curves. We then consider the contact of  $C$ with lines and circles, as is done for real plane case in \cite{BG}.

All the concepts treated here are local in nature, so we take a local parametrisation 
 $\gamma: D\to \mathbb C^2$ of $C$ at a given point, where $\gamma$ is a holomorphic function and $D$ 
 is a connected and simply connected open set. We write $\gamma(t)=(z_1(t), z_2(t))$.

Suppose that $\gamma'(t)$ is not an isotropic vector. We choose a branch of the square root function 
and call the vector
$$
T(t)=\frac{\gamma'(t)}{\langle \gamma'(t), {\gamma'(t)}\rangle^{\frac12}}=\frac{(z_1'(t),z_2'(t))}{(z_1'(t)^2+z_2'(t)^2)^{\frac12}}
$$
the {\bf unit tangent vector} to $\gamma$ at $t$.

The unit vector 
$$
N(t)=\frac{(-z_2'(t),z_1'(t))}{(z_1'(t)^2+z_2'(t)^2)^{\frac12}} 
$$
is orthogonal to $T(t)$ and is called the {\bf unit normal vector} to $\gamma$ at $t$. 

The map $N:D\to \mathbb CS^1$ is called the {\bf Gauss map} of the curve $\gamma$. 

We say that $\gamma$ is a {\bf unit speed parametrisation} if $\langle \gamma'(t), {\gamma'(t)}\rangle=1$, that is, $\gamma'(t)\in \mathbb CS^1$, for all $t\in D$.

A point $\gamma(t)$ is an {\bf isotropic point} if $\gamma'(t)$ is an isotropic vector.

\begin{rem}
{\rm 
As the inner product (\ref{eq:holomorphicscprod}) is holomorphic, the isotropic points are isolated in $D$ or are the whole $D$. In the latter case, the curve is parallel to one of the isotropic lines $z_2=\pm iz_1$.
}
\end{rem}

\begin{theo}\label{theo:unitspeed}
Let $\gamma:D\to \mathbb C^n$ be a local parametrisation of a regular holomorphic curve. 
Suppose that $t_0\in D$ is not an isotropic point. Then there is a simply connected open subset $D'$ of $D$ containing $t_0$ such that $\gamma|_{D'}$ can 
be reparametrised by unit speed.
\end{theo}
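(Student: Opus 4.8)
The plan is to reparametrise by arc length, exactly as in the real case, but being careful about the branch of the square root and the domain on which it is single-valued. First I would define the holomorphic arc-length function
\[
s(t)=\int_{t_0}^{t}\langle \gamma'(u),\gamma'(u)\rangle^{\frac12}\,du,
\]
where the integrand is the (single-valued) unit-speed denominator $\langle\gamma',\gamma'\rangle^{1/2}=(z_1'^2+z_2'^2)^{1/2}$ appearing in the definition of $T(t)$. The key point is that, since $t_0$ is not an isotropic point, $\langle\gamma'(t_0),\gamma'(t_0)\rangle\ne 0$, so by continuity there is an open neighbourhood of $t_0$ on which $\langle\gamma',\gamma'\rangle$ is nonzero and avoids the branch cut of the chosen square root. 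On such a neighbourhood $\langle\gamma',\gamma'\rangle^{1/2}$ is a well-defined holomorphic function, and I would shrink $D$ to a simply connected open subset $D'$ containing $t_0$ on which this holds; simple connectedness guarantees the integral defining $s$ is path-independent and holomorphic.

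Next I would observe that $s'(t)=\langle\gamma'(t),\gamma'(t)\rangle^{1/2}\ne 0$ on $D'$, so $s$ is a holomorphic function with nonvanishing derivative at $t_0$. By the holomorphic inverse function theorem, $s$ is a biholomorphism from a possibly smaller simply connected neighbourhood of $t_0$ onto an open set in $\mathbb C$, with holomorphic inverse $t=t(s)$. I would then set $\tilde\gamma(s)=\gamma(t(s))$ and compute, by the chain rule,
\[
\tilde\gamma'(s)=\gamma'(t(s))\,\frac{dt}{ds}=\frac{\gamma'(t(s))}{s'(t(s))}=\frac{\gamma'(t(s))}{\langle\gamma'(t(s)),\gamma'(t(s))\rangle^{1/2}}=T(t(s)).
\]
Taking the holomorphic inner product gives $\langle\tilde\gamma'(s),\tilde\gamma'(s)\rangle=\langle T,T\rangle=1$, since $T$ is by definition a unit vector, so $\tilde\gamma$ is the desired unit-speed reparametrisation.

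The main obstacle I anticipate is not the calculus, which is formally identical to the real argument, but the bookkeeping around the square root and isotropy. Specifically, I must ensure that the neighbourhood $D'$ is chosen so that $\langle\gamma',\gamma'\rangle$ stays away from the negative real axis (respectively the nonpositive reals) appropriate to the branch fixed via the sets $\mathcal B^{\pm}$ in \eqref{setB}, so that the square root is genuinely holomorphic and single-valued there; this is where the hypothesis that $t_0$ is \emph{not} isotropic is essential, since it is exactly isotropy ($\langle\gamma',\gamma'\rangle=0$) that forces the branch point. I would close by remarking that the construction is independent of which branch of the square root is chosen: the two branches differ by a sign, which merely replaces $s$ by $-s$ and $T$ by $-T$, leaving the unit-speed condition $\langle\tilde\gamma',\tilde\gamma'\rangle=1$ intact, consistent with the paper's stated goal that geometric features not depend on the choice of branch.
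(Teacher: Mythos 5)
Your proposal is correct and takes essentially the same route as the paper: both define the holomorphic arc-length $l(t)=\int_{t_0}^{t}\langle\gamma'(u),\gamma'(u)\rangle^{\frac12}\,du$ on a neighbourhood where a single branch of the square root is holomorphic (the paper phrases this via paths in $D\setminus\mathcal{B}^-$ or $D\setminus\mathcal{B}^+$), invoke the holomorphic inverse function theorem at $t_0$ using $l'(t_0)=\langle\gamma'(t_0),\gamma'(t_0)\rangle^{\frac12}\ne 0$, and verify the unit-speed property by the chain rule. Your closing remarks on branch bookkeeping and sign independence merely make explicit what the paper leaves implicit, so there is nothing to correct.
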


\begin{proof}
The proof follows the same steps as that for curves in the real plane; however some necessary adjustment need to be made. Let 
$$
l(t)=\int_{t_0}^t \langle \gamma'(u), {\gamma'(u)}\rangle^\frac12 du,
$$
where the integral is taken along any path that joins $t_0$ and $t$ in $D\setminus \mathcal{B}^-$ or $D\setminus \mathcal{B}^+$ depending on the choice of the branch of the square root function. 
The function $l:D\to \mathbb C$ 
is holomorphic in some neighbourhood of $t_0$ where $\langle \gamma'(t), {\gamma'(t)}\rangle \ne 0$.

Since $l'(t_0)=\langle \gamma'(t_0), {\gamma'(t_0)}\rangle^{\frac 12} \ne 0$, it follows by the inverse function theorem that there exist a connected, simply connected open set $D'\subset D$ containing $t_0$ such that 
$l:D'\to l(D')$ is biholomorphic.

The local reparametrisation $\beta(s)=\gamma(l^{-1}(s))$ of the curve $\gamma$ from $l(D')\to \mathbb C^n$ is
unit speed since 
$$
\beta'(s)=\frac{\gamma'(l^{-1}(s))}{l'(l^{-1}(s))}=\frac{\gamma'(l^{-1}(s))}{\langle \gamma'(l^{-1}(s)), {\gamma'(l^{-1}(s))}\rangle^\frac12}.
$$
\end{proof}

\begin{rem}
{\rm In the real case, $l'(u)\ne 0$ for all $u$ in the interval of definition of $\gamma$, so $l$ is strictly monotonous. Therefore, it has an inverse $l^{-1}: l(I)\to I$. In the complex case, the biholomorphicity of $l$ is valid only locally at $t_0$.
}
\end{rem}

We now shrink $D$ if necessary and take $\gamma:D\to \mathbb C^2$ to be a unit speed local parametrisation. Differentiation $\langle T(s), {T(s)}\rangle=1$ gives $\langle T'(s), {T(s)}\rangle=0$. This means that $T'(s)$ is parallel to $N(s)$, so there exist a scalar function $\kappa(s)$ such that 
$$
T'(s)=\kappa(s)N(s).
$$

We call $\kappa(s)$ the {\bf curvature} of $\gamma$ at $s$.

\begin{prop}\label{prop:SeretFrenetPlabnecurves}
Let $\gamma:D\to \mathbb C^2$ be a unit speed local parametrisation of a holomorphic plane curve, with $\gamma(t)=(z_1(t),z_2(t))$. Then,
 
{\rm (1)} $\kappa(s)=\langle T'(s), {N(s)}\rangle=(z_1'z_2''-z_2'z_1'')(s)$.

{\rm (2)} $N'(s)=-\kappa(s)T(s).$
\end{prop}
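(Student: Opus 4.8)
The plan is to exploit the fact that, at a non-isotropic point, $\{T(s),N(s)\}$ is an orthonormal frame for the holomorphic inner product, meaning $\langle T,T\rangle=\langle N,N\rangle=1$ and $\langle T,N\rangle=0$, and then to differentiate these three relations. In unit speed we have $T(s)=\gamma'(s)=(z_1'(s),z_2'(s))$ and $N(s)=(-z_2'(s),z_1'(s))$, and a direct check gives $\langle N,N\rangle=z_2'^2+z_1'^2=\langle T,T\rangle=1$, so $N$ is indeed a unit vector.

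For (1), I would start from the defining relation $T'(s)=\kappa(s)N(s)$ and pair it with $N(s)$: since $\langle N,N\rangle=1$, this yields $\langle T'(s),N(s)\rangle=\kappa(s)\langle N(s),N(s)\rangle=\kappa(s)$. The explicit determinant formula then follows by substituting $T'(s)=(z_1''(s),z_2''(s))$ and $N(s)=(-z_2'(s),z_1'(s))$ into the inner product, giving $\langle T',N\rangle=-z_1''z_2'+z_2''z_1'=z_1'z_2''-z_2'z_1''$.

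For (2), the idea is to expand $N'(s)$ in the frame as $N'=aT+bN$. First I note that this expansion is legitimate: an orthonormal pair for a symmetric bilinear form with nonzero self-products is automatically linearly independent (pairing $c_1T+c_2N=0$ with $T$ and with $N$ forces $c_1=c_2=0$), so $\{T,N\}$ is a basis of $\mathbb{C}^2$ at every non-isotropic point. Differentiating $\langle N,N\rangle=1$ gives $\langle N',N\rangle=0$, hence $b=0$; differentiating $\langle T,N\rangle=0$ gives $\langle N',T\rangle=-\langle N,T'\rangle$, and by the symmetry of the inner product together with part (1) this equals $-\langle T',N\rangle=-\kappa$, so $a=-\kappa$. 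Therefore $N'=-\kappa T$.

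The only point requiring care — the would-be main obstacle — is that the holomorphic inner product is degenerate in the sense that nonzero isotropic vectors exist, so ``orthonormal'' does not carry the same automatic meaning as in the Euclidean case. Here this causes no difficulty precisely because at a non-isotropic point both $T$ and $N$ have self-inner-product equal to $1\neq 0$; this nonvanishing is exactly what guarantees linear independence of the frame and lets the standard differentiation argument go through. Alternatively, one could bypass the frame entirely and verify $N'=-\kappa T$ componentwise, using the relation $z_1'z_1''+z_2'z_2''=0$ obtained by differentiating the unit-speed condition $z_1'^2+z_2'^2=1$; this identity is precisely what reconciles $N'=(-z_2'',z_1'')$ with $-\kappa(z_1',z_2')$.
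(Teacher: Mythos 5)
Your proof is correct, and for part (2) it takes a genuinely different route from the paper. The paper argues componentwise: differentiating the unit-speed condition $z_1'^2+z_2'^2=1$ gives $z_1'z_1''+z_2'z_2''=0$, which together with $z_1'z_2''-z_2'z_1''=\kappa$ is a linear system (with determinant $z_1'^2+z_2'^2=1$) solved explicitly as $z_1''=-\kappa z_2'$, $z_2''=\kappa z_1'$; then $N'=(-z_2'',z_1'')=-\kappa T$ follows at once. You instead work coordinate-free: you expand $N'=aT+bN$ and extract $b=0$ from differentiating $\langle N,N\rangle=1$ and $a=-\kappa$ from differentiating $\langle T,N\rangle=0$ together with part (1). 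Your justification of the expansion is the right one to insist on here: since the holomorphic inner product admits nonzero isotropic vectors, orthonormality does not automatically yield linear independence, and your pairing argument (or equivalently the observation that $\det\begin{pmatrix} z_1' & z_2'\\ -z_2' & z_1'\end{pmatrix}=z_1'^2+z_2'^2=1\neq 0$) closes that gap cleanly. What each approach buys: the paper's computation is shorter and exploits the special structure of $n=2$, where $N$ is given by an explicit formula; your frame argument is exactly the method the paper itself uses in Section 4 to derive the Frenet--Serret equations for space curves (e.g.\ $B'=\tau N$ and $N'=-\kappa T-\tau B$ are obtained there by expanding in the frame and differentiating the orthonormality relations), so your proof generalizes without change to higher dimensions. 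Your closing remark correctly identifies that the componentwise alternative you sketch, via $z_1'z_1''+z_2'z_2''=0$, is available — that alternative is in fact the paper's own proof.
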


\begin{proof}
Statement (1) is immediate.

For (2), differentiating $\langle T, {T} \rangle=z_1'^2+z_2'^2=1$ gives $z_1'z_1''+z_2'z_2''=0$.
That, with $z_1'z_2''-z_2'z_1''=\kappa$ yields $z_1''=-\kappa z_2'$ and $z_2''=\kappa z_1'$.

Differentiating $N=(-z_2',z_1')$, we get 
$N'=(-z_2'',z_1'')=-\kappa (z_1',z_2')=-\kappa T.$
\end{proof}

\begin{rems}\label{rem:kappaRegularC}
{\rm 
1. For a general parametrisation $\gamma$, one can reparametrise by unit speed, and show that the curvature is given by 
\begin{equation}\label{eq:kappaGeneral}
\kappa=\frac{z_1'z_2''-z_2'z_1''}{(z_1'^2+z_2'^2)^{\frac32}},  
\end{equation}
so,
\begin{equation}\label{eq:kappaDashGeneral}
\kappa'=\frac{(z_1'z_2'''-z_2'z_1''')(z_1'^2+z_2'^2)-3(z_1'z_1''+z_2'z_2'')(z_1'z_2''-z_2'z_1'')}{(z_1'^2+z_2'^2)^{\frac52}}.   
\end{equation}

2. The curve $\gamma$ is a part of a line if and only if $\kappa\equiv 0$.
It is a part of a complex circle if and only if $\kappa\equiv const\ne 0$.
}
\end{rems}

\begin{defn}\label{def:InfVert}
A point $t_0\in D$ on $\gamma$ is called an {\bf inflection of order $l$} if $\kappa(t_0)=\kappa'(t_0)=\ldots=\kappa^{(l-1)}(t_0)=0$ and $\kappa^{(l)}(t_0)\ne 0$. When $l=1$, it is called an 
ordinary inflection.

The point $t_0$ is called a {\bf vertex of order $l$} if $\kappa'(t_0)=\ldots=\kappa^{(l)}(t_0)=0$ and $\kappa^{(l+1)}(t_0)\ne 0$.  When $l=1$, it is called an 
ordinary vertex.
\end{defn}

\begin{rems}\label{rems:Propertieskappa}
{\rm 
1. It follows from the general expressions for the curvature in \eqref{eq:kappaGeneral} and its derivative in \eqref{eq:kappaDashGeneral} that the notions of ordinary inflection and vertex do not depend on the choice of the branch of the square root function. The same holds for inflections and vertices of any order, by induction on the expression of the derivative of the curvature function in \eqref{eq:kappaGeneral}.

1. Inflections of a curve are affine invariant and are defined as points where the curve has at least 3-point contact with its tangent line, equivalently, the intersection multiplicity of the curve with its tangent line is greater than 2. It is not difficult to show that this agrees with Definition \ref{def:InfVert} above (see \S \ref{ssec:contactlinesPlaneC}).

2. Vertices of plane curves can be defined in a similar way using the contact of the curve with complex circle, see \cite{BillMarcoF,Piene_etal,viro} and \S \ref{ssec:contactCircles}.
}
\end{rems}

We have the following obvious observation as a corollary of Proposition \ref{prop:SeretFrenetPlabnecurves}.

\begin{cor}
The Gauss map is a local bi-holomorphic diffeomorphism away from inflection points. It has an $A_1$-singularity at ordinary inflection points. 
\end{cor}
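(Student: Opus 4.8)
The plan is to reduce the whole statement to the structure equation $N'(s)=-\kappa(s)T(s)$ from Proposition~\ref{prop:SeretFrenetPlabnecurves}(2) and to read the rank of the derivative of the Gauss map directly off the vanishing of $\kappa$. Since $N$ is only defined where $\gamma'$ is non-isotropic, and since inflections (where $\kappa=0$) are disjoint from isotropic points (where $\kappa$ blows up), I would first invoke Theorem~\ref{theo:unitspeed} to reparametrise $\gamma$ by unit speed on a simply connected neighbourhood of the point under consideration. By Remark~\ref{rems:Propertieskappa} the conditions $\kappa=0$ and $\kappa'=0$ are invariant under reparametrisation and independent of the branch of the square root, so no generality is lost and the notions of inflection and ordinary inflection are well defined throughout.

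Next I would identify the target geometry. The map $N$ takes values in the $1$-dimensional complex manifold $\mathbb{CS}^1$, whose tangent space at $N(s)$ is $\{v\in\mathbb C^2:\langle v,N(s)\rangle=0\}$. Differentiating $\langle N,N\rangle=1$ shows $N'(s)$ lies in this tangent space, and since $T(s)$ is a unit vector with $\langle T(s),N(s)\rangle=0$, it too lies in $T_{N(s)}\mathbb{CS}^1$ and, being nonzero, spans this $1$-dimensional space. Hence the equation $N'=-\kappa T$ exhibits $N'(s)$ as a scalar multiple of a basis vector of $T_{N(s)}\mathbb{CS}^1$, so the derivative $\mathrm dN_s$ is an isomorphism precisely when $\kappa(s)\ne 0$. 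Away from inflection points this holds, and the holomorphic inverse function theorem yields that $N$ is a local biholomorphism there.

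For the remaining case, an ordinary inflection $s_0$ with $\kappa(s_0)=0$ and $\kappa'(s_0)\ne 0$, I would differentiate the structure equation and use $T'=\kappa N$ to obtain $N''=-\kappa'T-\kappa^2 N$, whence $N'(s_0)=0$ and $N''(s_0)=-\kappa'(s_0)T(s_0)\ne 0$. Choosing a holomorphic chart $w$ on $\mathbb{CS}^1$ centred at $N(s_0)$ whose coordinate vector agrees with $T(s_0)$ at that point, the composite $f=w\circ N$ is a germ $(\mathbb C,s_0)\to(\mathbb C,0)$ with $f'(s_0)=0$ and $f''(s_0)\ne 0$. Such a one-variable germ is $\mathcal A$-equivalent to the fold $s\mapsto s^2$, which is exactly the $A_1$ singularity.

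I expect the step requiring the most care to be the intrinsic bookkeeping in the third paragraph: setting up the chart so that the second-order computation is read off in $T_{N(s_0)}\mathbb{CS}^1$ rather than in the ambient $\mathbb C^2$, and verifying that $N''(s_0)\ne 0$ is genuinely the nondegeneracy of the second jet of $f$. Once that identification is made the classification is immediate; the only real check behind it is that $T(s_0)$ is a nonzero vector tangent to $\mathbb{CS}^1$, which follows from its being a unit vector orthogonal to $N(s_0)$.
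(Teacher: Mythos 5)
Your proof is correct and takes essentially the same route as the paper, which offers no separate argument but presents the corollary as an ``obvious observation'' following from Proposition~\ref{prop:SeretFrenetPlabnecurves}(2): the rank of $dN_s$ is read off $N'=-\kappa T$, and at an ordinary inflection one differentiates to get $N''(s_0)=-\kappa'(s_0)T(s_0)\ne 0$, yielding the fold. Your chart computation on $\mathbb{C}S^1$ merely makes explicit the details the paper leaves implicit, and it is carried out correctly.
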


We next make an observation concerning the $H$-normal lines of a curve in $\mathbb C^2$ with respect to the Hermitian inner product 
$$
\langle v,w\rangle_H=v_1\overline{w}_1+v_2\overline{w}_2.
$$

The equation of the $H$-normal line at $t$ of a curve $\gamma$  is given by 
$$
G_t(p)=\langle \gamma(t)-p, \gamma'(t)\rangle_H=0, \,\, p\in \mathbb C^2.
$$

The family of functions $G(t,p)=G_t(p)$ is not holomorphic, so we cannot define the envelope as a discriminant (\cite{BillEnvelopes}). 
We identify $\mathbb C^2$ with $\mathbb R^4$ and use the following definition of an envelope from \cite{ThomEnv}; see also \cite{MarioJorge}. 
A  germ of a $q$-parameter family of submanifolds of codimension $m-n+q$ in $\mathbb R^m$ is a 
diagram of smooth map-germs of the form 
$$
\mathbb R^q,0 \xleftarrow{\pi} \mathbb R^n,0 \xrightarrow{f}\mathbb R^m,0
$$
that satisfies the following conditions:

(a) $\pi$ is a germ of a fibration;

(b) $f$ restricted to each fibre $\pi^{-1}(q)$ is a germ of a one to one immersion.

If $S(f)=\{p\in \mathbb R^n: df_p \, \mbox{ is not surjective}\}$, then $E=f(S(f))$ is the envelope of the family.

\begin{theo} \label{theo:EmptyHermeEnv}
Let $\gamma:D\to \mathbb C^2$  be a parametrisation of a regular holomorphic curve. Then,
the envelope of the family of the normal lines to $\gamma$, with respect to the Hermitian inner product,  is empty.
\end{theo}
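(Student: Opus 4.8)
The plan is to write the family of $H$-normal lines explicitly as a diagram of the required form $\mathbb{R}^{2},0\xleftarrow{\pi}\mathbb{R}^{4},0\xrightarrow{f}\mathbb{R}^{4},0$ and then to show that the differential of $f$ is an isomorphism at the base point, so that $S(f)$ contains no point of the curve and the envelope $E=f(S(f))$ is empty. First I would identify the $H$-normal line itself: solving $\langle w,\gamma'(t)\rangle_H=0$ for $w\in\mathbb{C}^2$ shows that $\{G_t=0\}$ is the complex affine line through $\gamma(t)$ with direction $\eta(t)=(\overline{z_2'(t)},-\overline{z_1'(t)})$, the $H$-orthogonal complement of $\gamma'(t)$. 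I would then set $f(t,\lambda)=\gamma(t)+\lambda\,\eta(t)$ and $\pi(t,\lambda)=t$, identify $\mathbb{C}^2\cong\mathbb{R}^4$, and verify the two conditions of the envelope definition: $\pi$ is the trivial fibration, and for each fixed $t$ the map $\lambda\mapsto\gamma(t)+\lambda\eta(t)$ is a one-to-one immersion, since $\eta(t)\neq0$ by the regularity of $\gamma$. This reduces the statement to a computation of the real differential of $f$.

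The feature to exploit is that $\gamma$ depends holomorphically on $t$ whereas $\eta$ depends \emph{anti}-holomorphically on $t$, while $f$ is holomorphic in $\lambda$. Writing the Wirtinger derivatives $\partial f/\partial t=\gamma'$, $\partial f/\partial\lambda=\eta$, $\partial f/\partial\bar\lambda=0$ and $\partial f/\partial\bar t=\lambda\,\overline{(z_2'',-z_1'')}$, I would assemble the holomorphic and anti-holomorphic Jacobian blocks $A=[\,\partial_t f\mid\partial_\lambda f\,]$ and $B=[\,\partial_{\bar t} f\mid\partial_{\bar\lambda} f\,]$ and use the standard identity $\det_{\mathbb{R}}Df=\det\bigl(\begin{smallmatrix}A&B\\\bar B&\bar A\end{smallmatrix}\bigr)$ valid for a map $\mathbb{C}^2\to\mathbb{C}^2$. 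Since $\partial_{\bar\lambda}f=0$, the block $B$ has a zero column, which keeps this $4\times4$ determinant tractable.

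The decisive observation is the evaluation along the curve itself, where $\lambda=0$: there $B$ vanishes and the determinant collapses to $|\det A|^2$, with $\det A=\det[\,\gamma'\mid\eta\,]$. Because $\eta$ is by construction $H$-orthogonal to $\gamma'$ and $\langle\gamma',\gamma'\rangle_H=|z_1'|^2+|z_2'|^2>0$, the vectors $\gamma'$ and $\eta$ are $\mathbb{C}$-linearly independent; equivalently the four real vectors $\gamma',\,i\gamma',\,\eta,\,i\eta$ span $\mathbb{R}^4$. Hence $\det_{\mathbb{R}}Df=(|z_1'|^2+|z_2'|^2)^2\neq0$ at every point of the curve, so $f$ is a local diffeomorphism there; by continuity $S(f)$ meets no point of the curve, and the envelope (a germ at the base point $\gamma(t_0)$) is empty.

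The main obstacle is precisely that $f$ is not holomorphic, so the envelope cannot be read off as a complex discriminant $\{G=\partial_t G=0\}$ and one is forced to analyse the full $4\times4$ real Jacobian; the delicate part is organising this determinant so that the contribution of the anti-holomorphic block is transparent. The cleanest route is to split $Df$ along the Hermitian-orthogonal decomposition $\mathbb{C}^2=\mathbb{C}\gamma'\oplus\mathbb{C}\eta$, which exhibits the non-degeneracy immediately: the $\lambda$-directions span $\mathbb{C}\eta$, while the $t$-directions project onto $\mathbb{C}\gamma'$. It is exactly the anti-holomorphic dependence of the normal direction on $t$ that produces this obstruction near the curve; for the holomorphic metric the normal direction varies holomorphically, the family of normal lines is a holomorphic family, and its envelope—the evolute—is a genuine non-empty discriminant, which is what confirms that the holomorphic metric is the appropriate choice for our study.
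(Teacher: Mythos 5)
Your computation is carried out only at points of the zero section $\lambda=0$, and this is a genuine gap, not a shortcut: the envelope is $E=f(S(f))$ for the \emph{whole} family, and envelope points arise from critical points $(t,\lambda)$ with $\lambda\neq 0$, out along the normal lines. The step ``by continuity $S(f)$ meets no point of the curve, and the envelope \ldots{} is empty'' is a non sequitur --- continuity gives invertibility of $Df$ only on a neighbourhood of the zero section of unquantified size, and says nothing about $\lambda$ comparable to $1/\kappa$. The real Euclidean case shows why this restriction is fatal: for $f(t,\lambda)=\gamma(t)+\lambda N(t)$ in $\mathbb R^2$ one has $\det Df=1-\lambda\kappa(t)$, which equals $1$ at $\lambda=0$, yet the evolute is nonempty because the singularities sit at $\lambda=1/\kappa$. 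Your argument, applied verbatim to the holomorphic metric, would equally ``prove'' that the family of holomorphic normal lines has empty envelope, contradicting Corollary \ref{cor:envelopNormals}; so whatever distinguishes the Hermitian case cannot be visible at $\lambda=0$. Indeed, at $\lambda=0$ your anti-holomorphic block $B$ vanishes identically, so the mechanism you announce in your last paragraph (``it is exactly the anti-holomorphic dependence of the normal direction on $t$ that produces this obstruction'') is never actually engaged by your computation.

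The paper's proof, by contrast, evaluates the full real $4\times 4$ Jacobian at an \emph{arbitrary} point $(s,v)$ of the family (taking $\gamma$ unit speed for the holomorphic metric, so that the derivative of $\gamma'(s)^{\perp}$ can be expressed through $\kappa$ via the Frenet relations) and shows that the resulting determinant never vanishes, whence $S(f)=\emptyset$ globally and the envelope is empty; the nonvanishing for \emph{all} $v$ is the entire content of the theorem. To repair your proof you must complete your own block computation at general $\lambda$: with $A=[\,\gamma'\mid\eta\,]$ and $B=[\,\lambda\overline{\delta'}\mid 0\,]$, where $\delta=(z_2',-z_1')$, the identity you quote gives $\det_{\mathbb R}Df=\bigl|\det A\bigr|^2-|\lambda|^2\bigl|\det[\,\eta\mid\overline{\delta'}\,]\bigr|^2$; note that the anti-holomorphic column enters this expression with a \emph{negative} sign, so positivity for all $\lambda$ is precisely the nontrivial point that must be argued, and is in no way a consequence of invertibility along the curve. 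As it stands, your proposal proves a strictly weaker statement (no envelope point originates from the zero section), which holds just as well in the real and holomorphic settings and therefore cannot establish the theorem.
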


\begin{proof} We take $\gamma$ a unit speed local parametrisation (away from isotropic points) and consider the following diagram
$$
\mathbb C \xleftarrow{\pi} \mathbb C\times \mathbb C \xrightarrow{f}\mathbb C^2,
$$
where $\pi(s,v)=v$ and $f(s,v)=\gamma(s)+v \gamma'(s)^{\perp}$, where $\gamma'(s)^{\perp}=(-\overline{z'_2(s)},\overline{z'_1(s)})$ is 
orthogonal to $\gamma'(s)$ with respect to the Hermitian inner product. Then, the image of the fibre $\pi^{-1}(s)$ is  the $H$-normal line to $\gamma$ at $s$. 

We consider the family of the $H$-normal lines to $\gamma$ as a family of planes in $\mathbb R^4$ and show that their envelope is empty. 

We write $s=s_1+is_2$, $v=v_1+iv_2$, and  $\gamma(s)=(z_1(s),z_2(s))$.  Using the 
Wirtinger derivatives, we get
$$
\begin{array}{lcl}
\frac{\partial f}{\partial s_1}=(z_1', z_2')+v(-\overline{z_2''},\overline{z_1''}),&\quad&
\frac{\partial f}{\partial v_1}=(-\overline{z_2'},\overline{z_1'}),\\[0.2cm]
\frac{\partial f}{\partial s_2}=(z_1', z_2')+v(i\overline{z_2''},-i\overline{z_1''}),&\quad&
\frac{\partial f}{\partial v_2}=i(-\overline{z_2'},\overline{z_1'}).
\end{array}
$$

Identifying $\mathbb C^k$ with $\mathbb R^{2k}$, the determinant of the Jacobian matrix of $df_{(s,v)}$ is $1+||v\kappa(s)||^2>0$. Therefore, 
$S(f)$ is empty. Consequently, the $H$-normal lines to $\gamma$ do not have an envelope.
\end{proof}

\subsection{Contact with lines}\label{ssec:contactlinesPlaneC}

A line in $\mathbb C^2$, with an orthogonal vector $v$, has equation 
$$
\langle z,{v}\rangle =c,
$$
for some constant $c\in \mathbb C$.

In view of Remarks \ref{rems:Propertieskappa} (1), we 
consider $t$ in  the entire domaine of the parametrisation of the curve and define the {\bf family of height functions} $H:D\times \mathbb CS^1\to \mathbb C$
on $\gamma$ by
$$
H(t,v)=\langle \gamma(t),{v}\rangle.
$$ 

The function $H$ is holomorphic. For $v$ fixed, the function  $H_v(t)=H(t,v)$ is the height function along $v$.
Its singularities measure the contact of the curve $\gamma$ with the lines orthogonal to $v$.

We have the following result; see \cite{BG} for the real case analogue. 

\begin{theo}
	Let $\gamma:D\to \mathbb C^2$ be a local  parametrisation of a regular holomorphic curve without isotropic points.
	Then,
	
	{\rm (1)} The function $H_v$ is singular at $t_0$ if and only if $v\parallel N(t_0)$.
	
	{\rm (2)} For $v\parallel  N(t_0)$, the singularity of $H_v$ at $t_0$ is of type $A_1$ if and only if $\kappa(t_0)\ne 0$.
	It is of type  $A_2$ if and only if $\kappa(t_0)=0$ and $\kappa'(t_0)\ne 0$, that is, $t_0$ is an ordinary inflection point of the curve.
	
	{\rm (3)} For generic curves, $H_v$ can only have an $A_1$ or $A_2$ singularity and these are $\mathcal R^+$-versally unfolded by the family $H$.
\end{theo}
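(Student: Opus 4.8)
The plan is to follow the real-plane-curve argument of \cite{BG}, carrying everything out with the holomorphic inner product in place of the Euclidean one. By Theorem \ref{theo:unitspeed} I may shrink $D$ and assume $\gamma$ is unit speed near the point of interest, so that $T=\gamma'$, $\gamma''=T'=\kappa N$ and, using Proposition \ref{prop:SeretFrenetPlabnecurves}(2), $\gamma'''=\kappa'N-\kappa^2T$. Fixing $v\in\mathbb{C}S^1$ and writing $f=H_v$, differentiation gives
\begin{align*}
f'(t)&=\langle T(t),v\rangle,\\
f''(t)&=\kappa(t)\langle N(t),v\rangle,\\
f'''(t)&=\kappa'(t)\langle N(t),v\rangle-\kappa(t)^2\langle T(t),v\rangle.
\end{align*}
Since $\{T(t_0),N(t_0)\}$ is an orthonormal basis of $\mathbb{C}^2$ for the holomorphic inner product, these expressions encode the local behaviour of $f$ entirely in terms of $\kappa$.

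For (1), $f$ is singular at $t_0$ iff $f'(t_0)=\langle T(t_0),v\rangle=0$, i.e.\ iff $v$ is orthogonal to the non-isotropic vector $T(t_0)$; as the orthogonal complement of $T(t_0)$ is the line spanned by $N(t_0)$, this is exactly $v\parallel N(t_0)$. For (2), I take $v_0\parallel N(t_0)$, so $\langle T(t_0),v_0\rangle=0$ and $\langle N(t_0),v_0\rangle\ne0$. Then $f''(t_0)=\kappa(t_0)\langle N(t_0),v_0\rangle$ vanishes iff $\kappa(t_0)=0$, giving the $A_1$ criterion, and when $\kappa(t_0)=0$ one has $f'''(t_0)=\kappa'(t_0)\langle N(t_0),v_0\rangle$, nonzero iff $\kappa'(t_0)\ne0$, giving the $A_2$ criterion. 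A further differentiation shows $f^{(4)}(t_0)=\kappa''(t_0)\langle N(t_0),v_0\rangle$ whenever $\kappa(t_0)=\kappa'(t_0)=0$, so an $A_{\ge3}$ singularity forces $\kappa(t_0)=\kappa'(t_0)=0$.

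For (3) I separate genericity from versality. The $A_{\ge3}$ condition just derived is the pair of equations $\kappa(t_0)=\kappa'(t_0)=0$, two independent conditions on the $3$-jet of $\gamma$ (independent because $\kappa'$ involves the third-order part of the jet while $\kappa$ does not), cutting out a codimension-$2$ locus in the relevant jet space. Since $D$ is one-dimensional, Thom transversality gives, for a generic $\gamma$, that $j^3\gamma$ misses this locus; equivalently, the curvature has only simple zeros, so $H_v$ has only $A_1$ and $A_2$ singularities. For versality, parametrise $\mathbb{C}S^1$ near $v_0=N(t_0)$ by a single complex coordinate $u$; since $T_{v_0}\mathbb{C}S^1=\{w:\langle v_0,w\rangle=0\}$ is spanned by $T(t_0)$, the initial speed of the family is $g(t)=\partial H/\partial u(t,v_0)=\langle\gamma(t),T(t_0)\rangle$, with $g'(t_0)=\langle T(t_0),T(t_0)\rangle=1\ne0$. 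The $A_1$ case has $\mathcal{R}^+$-codimension $0$ and is versal automatically; in the $A_2$ case the ideal $\langle f'\rangle$ equals $\mathfrak{m}_{t_0}^2$, so $1$ and $g$ project to a basis of the two-dimensional quotient $\mathcal{O}_{t_0}/\langle f'\rangle$ precisely because $g'(t_0)\ne0$, which is the standard infinitesimal $\mathcal{R}^+$-versality criterion.

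The one point requiring care is the square-root branch entering $\kappa$ and the Frenet frame; by Remarks \ref{rems:Propertieskappa}(1) the vanishing orders of $\kappa$ and $\kappa'$, hence the singularity types in (1)--(2), are branch-independent, and the velocity computation in (3) uses only $\langle T(t_0),T(t_0)\rangle=1$, valid for either branch. The main obstacle I anticipate is not any single computation but making the genericity statement precise in the holomorphic category --- fixing the space of curves and the topology in which ``generic'' is meant and justifying the transversality theorem there --- after which the codimension count reduces, as above, to the elementary fact that a one-parameter family cannot stably carry a singularity of $\mathcal{R}^+$-codimension exceeding one.
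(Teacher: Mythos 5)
Your computation is correct and is precisely the argument the paper intends: it gives no proof of this theorem, simply citing the real-case analogue in \cite{BG}, and your derivatives of $H_v$ in the Frenet frame, the $A_1$/$A_2$ criteria via $\kappa$ and $\kappa'$, and the versality check using $g(t)=\langle\gamma(t),T(t_0)\rangle$ with $g'(t_0)=1$ are exactly that argument transplanted to the holomorphic inner product. Your explicit flagging of the branch-independence of $\kappa$ and of the need to fix a precise genericity framework in the holomorphic category is, if anything, more careful than the paper, which handles both points only by the remarks preceding the theorem and the general comment in the introduction.
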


\begin{rem}
{\rm 
The family of height functions can be defined at an isotropic point $t_0$ 
as $H:D\times \mathbb C P^1\to \mathbb C$, with $H(t,\bar{v})=\langle \gamma(t),{v}\rangle$ and $v\in \mathbb C^2$ any representative of $\bar{v}$. Then $H_{\bar{v}}$ is singular at $t_0$ if and only if $\bar{v}\parallel \gamma'(t_0)$. The singularity is generically an $A_1$ as generically $\langle \gamma''(t_0),\gamma'(t_0)\rangle\ne 0.$
}
\end{rem}

\subsection{Contact with circles}\label{ssec:contactCircles}

The contact of the curve $\gamma$ at $t_0$ with the circle of centre ${c}$ passing through $\gamma(t_0)$ is measured by the singularities of the (contact) function 
\begin{equation}
d_{c}(t)=\langle \gamma(t)-c, {\gamma(t)-c}\rangle. 
\end{equation}

The function $d_c$ is holomorphic, and we call it the {\bf distance squared function}. 

Varying $c\in \mathbb C^2$, yields the {\bf family distance squared functions}
$d:D\times  \mathbb C^2\to \mathbb C$, given by 
\begin{equation}
d(t,c)=d_c(t),
\end{equation}
which is also a holomorphic function.

Away from isotropic points, we parametrise 
$\gamma$ by unit speed, using one of the branches of the square root function. 
Then, the function $d_c$ is singular at $s_0$ (i.e., has an $A_{\ge 1}$-singularity) if and only if
$$
\frac12 d'_{c}(s_0)=\langle T'(s_0), {\gamma(t_0)-c}\rangle =\langle \gamma(t_0)-c, {T'(s_0)}\rangle=0,
$$
that is, 
 $c=\gamma(s_0)+\lambda N(s)$, 
for some $\lambda \in \mathbb C$. 
This means that the circle is tangent to the curve at $s_0$ if and only if its centre ${c}$ 
belongs to the line through $\gamma(s_0)$ and parallel to $N(s_0)$.
We call this line the {\bf normal line} to $\gamma$.

Differentiating twice, gives
$$
\frac12 d''_{c}(s_0)=\kappa(s_0)\langle N(s_0), {\gamma(s_0)-c}\rangle +1.
$$

Suppose that $s_0$ is not an inflection point. 
Then, the singularity of $d_c$ at $s_0$ is of type $A_{\ge 2}$ if and only if $d'_c(s_0)=d''_c(s_0)=0$, equivalently, 

\begin{align} \label{eq:evolute}
c=e(s_0)=&\,  \gamma(s_0)+\frac1{\kappa(s_0)}N(s_0) \nonumber \\
=&\,  (z_1(s_0),z_2(s_0))+\frac{(z_1'^2+z_2'^2)(s_0)}{(z_1'z_2''-z_2'z_1'')(s_0)}(-z'_2(s_0),z'_1(s_0)).
\end{align}

Varying $s_0$ in $D$, we get a parametrised plane curve, called the {\bf evolute} of $\gamma$. 
Clearly, the evolute is independent of the choice of branch of the square root function.

The circle of centre $e(s_0)$ tangent to the curve at $\gamma(t_0)$ is called the {\bf osculating circle} of $\gamma$ at $s_0$.

With $c=e(s_0)$, we have $\frac12 d'''_c(s_0)=0$ if and only if $\kappa'(s_0)=0$. Thus, the singularity of 
$d_c$ at $t_0$ is of type $A_{\ge 3}$ if and only if $c=e(s_0)$ and $t_0$ is a vertex of $\gamma$.

We have the following result;  see \cite{BG} for the real case analogue. 

\begin{theo}
Let $\gamma:D\to \mathbb C^2$ be a local parametrisation of a regular holomorphic curve without inflections and isotropic points. Then,
the distance squared functions $d_c$ has a singularity of type

$A_1$ $\iff$ $c\ne e(t_0)$ is on the normal line of $\gamma$ at $t_0$.

$A_2$ $\iff$ $c= e(t_0)$ and $t_0$ is not a vertex of $\gamma$.

$A_3$ $\iff$ $c= e(t_0)$ and $t_0$ is an ordinary vertex of $\gamma$.

For a generic $\gamma$, the function $d_c$ has only the above singularities and these are $\mathcal R^+$-versally unfolded by the family $d$. 

The local component of the bifurcation set of the family $d$ is precisely the evolute of $\gamma$. 
Consequently, the evolute of a generic curve can only have cusp singularities.
\end{theo}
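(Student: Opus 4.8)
The plan is to extend the derivative computations already carried out for $d_c$ in order to pin down the exact singularity type, then to verify $\mathcal R^+$-versality of $d$ by a direct calculation, and finally to read off the evolute and its cusps from the bifurcation set of this versal family. For the classification, the computations preceding the statement give, along a unit speed parametrisation, $\tfrac12 d_c''(s_0)=\kappa\langle N,\gamma-c\rangle+1$ and, at $c=e(s_0)$, both $d_c'(s_0)=d_c''(s_0)=0$ and $\tfrac12 d_c'''(s_0)=-\kappa'(s_0)/\kappa(s_0)$. To distinguish $A_3$ from higher singularities I would differentiate once more. Writing $u=\gamma-c$ and using $u'=T$, $T'=\kappa N$, $N'=-\kappa T$ from Proposition \ref{prop:SeretFrenetPlabnecurves}, a short computation gives $\tfrac12 d_c^{(4)}=\kappa''\langle N,u\rangle-3\kappa\kappa'\langle T,u\rangle-\kappa^3\langle N,u\rangle-\kappa^2$; at $c=e(s_0)$ over a vertex ($\kappa'(s_0)=0$, so $u=-\tfrac1\kappa N$) this collapses to $\tfrac12 d_c^{(4)}(s_0)=-\kappa''(s_0)/\kappa(s_0)$. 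Since $\kappa\neq 0$ (no inflections), this is nonzero exactly when $\kappa''(s_0)\neq 0$, i.e. at an ordinary vertex. Together with the earlier thresholds, this establishes the three equivalences for $A_1$, $A_2$ and $A_3$.

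Next I would prove $\mathcal R^+$-versality by the standard infinitesimal criterion (see \cite{BG,wallsurvey}): $d$ is an $\mathcal R^+$-versal unfolding of an $A_k$-singularity of $d_c$ at $s_0$ precisely when the initial velocities $\partial d/\partial c_1$ and $\partial d/\partial c_2$ span the quotient of the local ring of germs in $s$ by the ideal $\langle d_c'\rangle$ and the constants. Here $\partial d/\partial c_i=-2(z_i(s)-c_i)$, so the relevant data are the coefficients of $(s-s_0)$ and $(s-s_0)^2$, namely $z_i'(s_0)$ and $z_i''(s_0)$. At an $A_2$-point one needs $\gamma'(s_0)=(z_1',z_2')(s_0)\neq 0$, which holds by regularity; at an $A_3$-point one needs the $2\times2$ matrix with rows $(z_1',z_2')(s_0)$ and $(z_1'',z_2'')(s_0)$ to be invertible, and its determinant is $(z_1'z_2''-z_2'z_1'')(s_0)=\kappa(s_0)\neq 0$ by the no-inflection hypothesis. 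Hence $d$ is $\mathcal R^+$-versal at every singularity that occurs under the standing hypotheses.

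For the genericity clause I would run a Thom-type transversality argument on jets of holomorphic parametrisations to show that, for generic $\gamma$, the curvature satisfies $\kappa''\neq 0$ wherever $\kappa'=0$; then only ordinary vertices occur and $d_c$ has no singularity worse than $A_3$. The bifurcation set of $d$ is the set of $c$ for which $d_c$ has a degenerate critical point, that is $\{c:\exists\,s_0,\ d_c'(s_0)=d_c''(s_0)=0\}$, which by the analysis above is exactly the image of $e$, the evolute \eqref{eq:evolute}. Because $d$ is $\mathcal R^+$-versal and only $A_2$ and $A_3$ arise, the evolute is locally diffeomorphic to the bifurcation set of the versal unfolding of $A_2$ (a smooth curve) or of $A_3$ (an ordinary cusp). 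This can also be checked directly: differentiating $e=\gamma+\tfrac1\kappa N$ gives $e'=-\tfrac{\kappa'}{\kappa^2}N$, which vanishes precisely at vertices, while at an ordinary vertex $e''$ is a nonzero multiple of $N$ and $e'''$ acquires a nonzero $T$-component, so $e''(s_0)$ and $e'''(s_0)$ are independent and $e$ has a $(2,3)$-cusp. Thus the evolute can only have cusp singularities.

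I expect the main obstacle to be the genericity step rather than the computations. The singularity classification and the versality criterion mirror the real arguments of \cite{BG} once the Frenet relations and the holomorphicity of $d$ are in hand, and the versality computation reduces, as above, to the non-vanishing of $\kappa$. The delicate point is to make the word \emph{generic} precise and usable in the holomorphic category: one must choose the right jet space of curve germs, identify the stratum forcing a degenerate vertex ($\kappa'=\kappa''=0$) and verify that it has sufficiently high codimension, and invoke a transversality theorem valid for holomorphic families, so that a generic $\gamma$ meets it only in the expected way (isolated ordinary vertices). Everything downstream — the identification of the bifurcation set with the evolute and the cusp conclusion — then follows formally from $\mathcal R^+$-versality.
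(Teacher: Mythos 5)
Your proposal is correct and follows essentially the same route as the paper: the paper establishes the $A_{\ge 1}$, $A_{\ge 2}$ and $A_{\ge 3}$ conditions by the same unit-speed derivative computations in \S\ref{ssec:contactCircles} and then appeals to the standard real-case machinery of \cite{BG} for versality, genericity and the structure of the bifurcation set. Your additions --- the fourth-derivative computation $\tfrac12 d_c^{(4)}(s_0)=-\kappa''(s_0)/\kappa(s_0)$ at $c=e(s_0)$ over a vertex, the infinitesimal versality check via the determinant $z_1'z_2''-z_2'z_1''=\kappa(s_0)\ne 0$, and the direct verification that $e''(s_0)$ and $e'''(s_0)$ are independent at an ordinary vertex --- are all accurate and simply fill in the details the paper leaves to the cited reference.
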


As $e'(t)=-\frac{\kappa'(t)}{\kappa(t)^2}N(t)$, we have the following consequence.

\begin{cor}\label{cor:envelopNormals}
Away from vertices, the tangent line to the evolute is the normal line to the curve at the associated point.
Consequently, the evolute is the envelope of the normal lines of $\gamma$.
\end{cor}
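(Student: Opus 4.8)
The plan is to derive both assertions from the formula $e'(t)=-\frac{\kappa'(t)}{\kappa(t)^2}N(t)$ recorded just above the statement. First I would verify this formula by differentiating the defining expression $e(s)=\gamma(s)+\frac{1}{\kappa(s)}N(s)$ for a unit speed parametrisation, using $\gamma'=T$, the relation $T'=\kappa N$ defining the curvature, and the equation $N'=-\kappa T$ from Proposition \ref{prop:SeretFrenetPlabnecurves}(2). The contributions $\gamma'=T$ and $\frac{1}{\kappa}N'=-T$ cancel, leaving $e'=-\frac{\kappa'}{\kappa^2}N$. Away from inflections the evolute is defined and $\kappa\neq 0$, while away from vertices $\kappa'\neq 0$; hence $e'(s)\neq 0$ there, so the evolute is a regular curve whose tangent direction is parallel to $N(s)$.

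For the first assertion I would note that $e(s)=\gamma(s)+\frac{1}{\kappa(s)}N(s)$ lies on the normal line of $\gamma$ at $s$, that is, the line through $\gamma(s)$ with direction $N(s)$. The tangent line to the evolute at $e(s)$ passes through $e(s)$ and, by the formula above, also has direction $N(s)$. Two lines sharing the point $e(s)$ and the direction $N(s)$ coincide, so the tangent line to the evolute at the associated point is exactly the normal line to $\gamma$. For the second assertion, the cleanest route is to identify the envelope of the family of normal lines, understood in the holomorphic sense as a discriminant, directly with the evolute. I would describe the family as the zero set of $F(s,p)=\langle p-\gamma(s),T(s)\rangle$, so that $F(s,\cdot)=0$ is the normal line at $s$, and compute the discriminant $\{p:F=0,\ \partial F/\partial s=0\}$. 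Since $\partial F/\partial s=-1+\kappa(s)\langle p-\gamma(s),N(s)\rangle$, the condition $F=0$ forces $p-\gamma(s)=\mu N(s)$ (as $T,N$ span $\mathbb C^2$ away from isotropic points), and $\partial F/\partial s=0$ then gives $\kappa(s)\mu=1$, hence $p=e(s)$. Thus the envelope is precisely the evolute.

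The only real subtlety, and the step I would treat most carefully, is the meaning of \emph{envelope}. In the Hermitian setting the paper adopts Thom's definition via $S(f)$, whereas for the holomorphic family the natural object is the discriminant; I would state explicitly that \textbf{envelope} here means the discriminant of the holomorphic family of normal lines, so that the tangency statement of the first assertion and the discriminant computation of the second agree. I would also remark, as the text has already observed for $e(s)$ itself, that the evolute and the normal lines are independent of the branch of the square root, so neither conclusion depends on that choice.
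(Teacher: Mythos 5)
Your proposal is correct. The first assertion is handled exactly as the paper does: the paper's entire proof is the sentence preceding the corollary, namely the formula $e'(t)=-\frac{\kappa'(t)}{\kappa(t)^2}N(t)$ (which follows, as you compute, from $\gamma'=T$, $T'=\kappa N$ and Proposition~\ref{prop:SeretFrenetPlabnecurves}(2), with the $T$-terms cancelling), after which the tangency statement is read off since $e'\ne 0$ away from vertices and $e(s)$ lies on the normal line. Where you genuinely diverge is the second assertion: the paper dispatches it with the single word ``Consequently,'' implicitly invoking the classical tangency characterisation of envelopes, whereas you pin down the definition (envelope $=$ discriminant of the holomorphic family, consistent with the paper's remark before Theorem~\ref{theo:EmptyHermeEnv} citing \cite{BillEnvelopes}) and compute the discriminant of $F(s,p)=\langle p-\gamma(s),T(s)\rangle$ explicitly. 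It is worth noting that this computation is not new relative to the paper, only repackaged: since $F(s,c)=-\frac12\,\partial d/\partial s\,(s,c)$ for the distance-squared family, your conditions $F=\partial F/\partial s=0$ are exactly the conditions $d_c'=d_c''=0$ already solved in \S\ref{ssec:contactCircles} to give $c=e(s)$ in \eqref{eq:evolute}, and the conclusion also matches the statement that the local component of the bifurcation set of $d$ is the evolute. So your route buys precision about what ``envelope'' means in the holomorphic setting (the paper only defines envelopes carefully, via Thom, for the non-holomorphic Hermitian family), at the cost of redoing a computation the paper already has on record; citing \S\ref{ssec:contactCircles} would shorten your argument to the paper's length while keeping your added rigour. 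Your remark on branch-independence is consistent with the paper and harmless, though not needed once everything is phrased via the holomorphic function $F$.
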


The following result describes the behavior of the evolute at isotropic points.

\begin{prop}
Let $\gamma: D \rightarrow \mathbb{C}^{2}$ be a regular holomorphic curve. Suppose that $t_{0}\in D$ is an isotropic point and that the curve has an ordinary contact with its tangent line at $t_0$. Then, the evolute extends to a holomorphic curve at $t_0$ that is tangent to $\gamma$ at this point.
\end{prop}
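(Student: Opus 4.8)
The plan is to work directly with the closed expression for the evolute in \eqref{eq:evolute}, which---crucially---contains no square roots: the factors $(z_1'^2+z_2'^2)^{1/2}$ coming from $N(s_0)$ and from $1/\kappa(s_0)$ cancel, leaving the manifestly holomorphic (indeed rational in the derivatives of $\gamma$) map
$$
e(t)=\gamma(t)+\frac{(z_1'^2+z_2'^2)(t)}{(z_1'z_2''-z_2'z_1'')(t)}\,(-z_2'(t),z_1'(t)).
$$
First I would translate the hypothesis. The curve has ordinary contact with its tangent line at $t_0$ exactly when the height function along the normal direction $(-z_2'(t_0),z_1'(t_0))$ has a nondegenerate critical point there; differentiating this height function twice shows that this is equivalent to $(z_1'z_2''-z_2'z_1'')(t_0)\neq 0$, i.e.\ to the denominator of $e$ being nonzero at $t_0$. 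Hence $e$ is holomorphic on a neighbourhood of $t_0$ and extends the evolute across the isotropic point.

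Next I would evaluate at $t_0$. Since $t_0$ is isotropic, $\langle\gamma'(t_0),\gamma'(t_0)\rangle=(z_1'^2+z_2'^2)(t_0)=0$, so the numerator of the second term vanishes and $e(t_0)=\gamma(t_0)$: the extended evolute passes through $\gamma(t_0)$. For the tangency I would write $e=\gamma+f\,w$ with $f=(z_1'^2+z_2'^2)/(z_1'z_2''-z_2'z_1'')$ and $w=(-z_2',z_1')$, and differentiate. Because $f(t_0)=0$, the term $f\,w'$ drops out and $e'(t_0)=\gamma'(t_0)+f'(t_0)\,w(t_0)$.

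The key geometric observation---this is where the holomorphic metric does the work---is that at an isotropic point the normal direction $w(t_0)$ is parallel to $\gamma'(t_0)$. Indeed $w(t_0)$ is itself isotropic, since $\langle w,w\rangle=z_1'^2+z_2'^2=0$, and it is orthogonal to $\gamma'(t_0)$; but in $\mathbb{C}^2$ the only isotropic directions are $(1,\pm i)$, which span lines that are their own orthogonal complements, so two mutually orthogonal isotropic vectors are necessarily parallel. Concretely, if $z_2'(t_0)=\varepsilon\, i\, z_1'(t_0)$ with $\varepsilon=\pm1$, then $w(t_0)=-\varepsilon\, i\,\gamma'(t_0)$. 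Therefore $e'(t_0)=(1-\varepsilon\, i\, f'(t_0))\,\gamma'(t_0)$ is a scalar multiple of $\gamma'(t_0)$, which is exactly the asserted tangency.

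I expect the only real obstacle to be conceptual rather than computational: naively $e=\gamma+\kappa^{-1}N$ is an indeterminate form at $t_0$, because $\kappa$ blows up (the denominator $(z_1'^2+z_2'^2)^{3/2}\to 0$) while $N$ is undefined there. The resolution---recognising the square-root cancellation that makes \eqref{eq:evolute} holomorphic and branch-independent---is what legitimises the extension. A secondary point worth noting is the regularity of the extended evolute at $t_0$, i.e.\ that $e'(t_0)\neq0$, equivalently $1-\varepsilon\, i\, f'(t_0)\neq0$; this holds generically, and in any case the tangency statement only requires the direction of $e'(t_0)$ to coincide with that of $\gamma'(t_0)$.
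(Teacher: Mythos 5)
Your argument is correct and is essentially the paper's own proof: the authors likewise read everything off the square-root-free parametrisation \eqref{eq:evolute}, observing that ordinary contact with the tangent line is exactly $(z_1'z_2''-z_2'z_1'')(t_0)\neq 0$, so the evolute is holomorphic at $t_0$ (your evaluation $e(t_0)=\gamma(t_0)$ and the tangency computation via $w(t_0)=-\varepsilon i\,\gamma'(t_0)$ just fill in what the paper leaves implicit). One small improvement: your hedge ``this holds generically'' for $e'(t_0)\neq 0$ is unnecessary, since with $z_2'(t_0)=\varepsilon i z_1'(t_0)$ one has $z_1''+\varepsilon i z_2''=\varepsilon i\,(z_2''-\varepsilon i z_1'')$ at $t_0$, hence $f'(t_0)=2\varepsilon i$ and $e'(t_0)=(1-\varepsilon i f'(t_0))\,\gamma'(t_0)=3\,\gamma'(t_0)\neq 0$, so the extended evolute is in fact always regular at $t_0$ under the stated hypotheses.
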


\begin{proof}
The proof follows from the parametrisation of the evolute in \eqref{eq:evolute}. The assumption on the curve $\gamma$ having an ordinary contact with its tangent line at $t_0$ is equivalent to $(z_1'z_2''-z_2'z_1'')(t_0)\ne 0.$
\end{proof}

\subsection{Algebraic curves}
Let $f(z_1,z_2)=0$ 
be a regular algebraic curve of degree $d$  in $\mathbb{C}^2$. 
A general $f=0$ has $3d(d-2)$ inflection points and  $2d(3d-5)$ vertices; 
see \cite[Chapter 6]{MetrixAlgGeom} for proofs and references. 

Isotropic points of the curve $f=0$ are solutions of the system
\begin{equation}\label{sys:NrIP}
 \left\{
\begin{aligned}
f(z_1,z_2)&=0,\\
(f_{z_{1}}^{2} + f_{z_{2}}^{2})(z_{1},z_{2})&=0.
\end{aligned}
\right.
\end{equation}

We write 
$$
f(z_1,z_2)=f_d(z_1,z_2) + f_{d-1}(z_1,z_2) + \cdots + f_{0}(x,y),
$$
where each $f_j(z_1,z_2)$ is homogeneous of degree $j$.

\begin{theo} Let $f=0$ be a regular algebraic curve of degree $d$.
Suppose the circular points at infinity $(1: \pm i:0)$ are not roots of $f_d$, and that $f_d$ is reduced. Then the curve $f=0$ has $2d(d-1)$ isotropic points. 
\end{theo}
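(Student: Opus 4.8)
The plan is to recognise the system \eqref{sys:NrIP} as a Bézout intersection and then use the two hypotheses to ensure that the full intersection number is realised by affine points. The isotropic points are exactly the common zeros of $f$ and $g=f_{z_1}^2+f_{z_2}^2$. Since $\deg f=d$ while $f_{z_1},f_{z_2}$ have degree $d-1$, the polynomial $g$ has degree at most $2d-2$, with leading form $((f_d)_{z_1})^2+((f_d)_{z_2})^2$. First I would check that this leading form is nonzero: otherwise $f_d$ would be (a constant times) a power of an isotropic linear form, which for $d\ge 2$ contradicts $f_d$ being reduced. Hence $\deg g=2(d-1)$ exactly, and passing to $\mathbb{CP}^2$, Bézout's theorem gives that $f=0$ and $g=0$ meet in $d\cdot 2(d-1)=2d(d-1)$ points, counted with multiplicity, provided the two curves have no common component. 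The proof therefore reduces to two claims: (i) $f$ and $g$ share no component, and (ii) no common zero lies on the line at infinity.

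For (ii), I would exploit the factorisation $g=(f_{z_1}+if_{z_2})(f_{z_1}-if_{z_2})$. A point at infinity of $f=0$ is a root $[z_1:z_2]$ of $f_d$; because $f_d$ is reduced its roots are simple, so the gradient $((f_d)_{z_1},(f_d)_{z_2})$ does not vanish there, while Euler's relation yields $z_1(f_d)_{z_1}+z_2(f_d)_{z_2}=0$ at such a root. If this point also lay on $g=0$ then $(f_d)_{z_1}=\pm i\,(f_d)_{z_2}$ there, and substituting into Euler's relation forces $z_2=\mp i z_1$, that is, the point is one of the circular points $(1:\pm i:0)$. Since these are excluded by hypothesis, no common zero lies at infinity.

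For (i), the clean viewpoint is the linear change of coordinates $u=z_1+iz_2$, $v=z_1-iz_2$, which sends the isotropic directions to the coordinate axes and gives $g=4\,f_u f_v$. I would first note that $f_d$ reduced forces $f$ itself to be reduced, since a repeated factor of $f$ produces a repeated factor of $f_d$. Now suppose $p$ is a common irreducible factor of $f$ and $g=4f_uf_v$; being prime, $p$ divides $f_u$ or $f_v$, say $f_v$. Writing $f=pq$ with $p\nmid q$ (possible as $f$ is reduced), the identity $f_v=p_v q+p\,q_v$ together with $p\mid f_v$ gives $p\mid p_v q$, hence $p\mid p_v$, and so $p_v=0$ for degree reasons. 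Thus $p$ depends only on $u$ and, being irreducible, equals a linear form $u-c$; its zero set is the isotropic line $z_1+iz_2=c$, whose point at infinity is the circular point $(1:i:0)$, which would then be a root of $f_d$, a contradiction. The case $p\mid f_u$ produces the other circular point, so (i) holds.

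Combining (i) and (ii), all $2d(d-1)$ intersection points furnished by Bézout lie in the affine plane $\mathbb{C}^2$ and are precisely the isotropic points of $f=0$. The main obstacle, and the step where both hypotheses are genuinely used, is claim (i): the structural fact that any shared component of $f$ and $g$ must be an isotropic line, which is exactly what the circular-points condition forbids; the reducedness of $f_d$ enters both there (to pass from $f_d$ reduced to $f$ reduced) and in fixing $\deg g=2(d-1)$. If one wants the $2d(d-1)$ points to be distinct rather than counted with multiplicity, one further invokes genericity of $f$ to make the intersections of $f=0$ with $f_uf_v=0$ transverse.
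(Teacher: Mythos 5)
Your proof is correct, and its overall skeleton coincides with the paper's: both count the intersections of $f=0$ with $f_{z_1}^2+f_{z_2}^2=(f_{z_1}+if_{z_2})(f_{z_1}-if_{z_2})=0$ by B\'ezout in $\mathbb{CP}^2$, and both rule out intersections on the line at infinity by the same Euler-identity computation, using reducedness of $f_d$ to get a nonvanishing gradient and the circular-points hypothesis to kill the remaining case. Where you genuinely diverge is the no-common-component step, your claim (i). The paper disposes of it in one line: $F=0$ is regular, hence irreducible, and since $\deg(F_{z_1}\pm iF_{z_2})=d-1<d$, the irreducible $F$ cannot divide either factor. Your argument never uses smoothness: passing to $u=z_1+iz_2$, $v=z_1-iz_2$, you use $g=4f_uf_v$ and the derivation identity $f_v=p_vq+pq_v$ to show that any common irreducible factor must be an isotropic line, whose point at infinity is a circular point dividing $f_d$, which the hypotheses forbid. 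This buys two things. First, it is strictly more general, applying to singular (merely reduced) curves. Second, it quietly patches a gap in the paper's shortcut: regularity of the \emph{affine} curve $f=0$ does not by itself force irreducibility of the projective closure (two parallel lines form a smooth affine curve that is reducible, with the components meeting at infinity), so the paper's one-liner implicitly needs smoothness at infinity as well, whereas your factor argument needs nothing of the sort. You also verify explicitly that $\deg g=2(d-1)$, i.e.\ that the leading form $4(f_d)_u(f_d)_v$ does not vanish identically — a degree check the paper sidesteps by homogenizing the factors $F_{z_1}\pm iF_{z_2}$ directly. One cosmetic slip there: you invoke reducedness of $f_d$ ``for $d\ge 2$''; in the case $d=1$, vanishing of the leading form would make $f_d$ an isotropic linear form, which is excluded by the circular-points hypothesis rather than by reducedness (and then $g$ is a nonzero constant, consistent with the count $2d(d-1)=0$). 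Finally, your closing remark is the right reading of the statement: both proofs count the $2d(d-1)$ isotropic points with intersection multiplicity, with distinctness only for generic $f$.
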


\begin{proof}
To count the number of solutions of the systems \eqref{sys:NrIP}, we projectivise and apply Bézout’s theorem to obtain the number of intersections of the corresponding curves in 
$\mathbb CP^2$. We then consider possible solutions that lie on the line at infinity.

Denote by 
$F(z_1:z_2:z_3)=0$ the homogenization of $f(z_1,z_2)=0$ in $\mathbb{C}P^2$, given by
\begin{equation}\label{eq:HomogF}
F(z_1:z_2:z_3) = f_d(z_1,z_2) + z_3 f_{d-1}(z_1,z_2) + \cdots + z_3^df_{0}(x,y).    
\end{equation}

Projectivising the second equation in \eqref{sys:NrIP}, we count the intersections of the curves  
$$
F=0 \quad \mbox{and}\quad (F_{z_{1}} + iF_{ z_{2}})(F_{z_{1}} -iF_{ z_{2}})=0
$$ 
in $\mathbb{PC}^{2}$. 

The isotropic points on the line at infinity are given by
$$\left\{
\begin{array}{rcl}
    f_{d} &=& 0,\\
    (f_{{d}_{z_{1}}} +if_{{d}_{z_{2}}})(f_{{d}_{z_{1}}} - if_{{d}_{z_{2}}})&=& 0.
\end{array}
\right.
$$

Let $\alpha = (\alpha_1:\alpha_{2}:0) \in \mathbb{PC}^{2}$ be a solution of the system of equations above. It follows from Euler's identity that 
$\alpha_1f_{{d}_{z_{1}}}(\alpha) + \alpha_{2}f_{{d}_{z_{2}}}(\alpha)=df_{d}(\alpha) = 0$. Substituting in the second equation of the system above yields $(\pm i\alpha_1 -\alpha_2)f_{{d}_{z_{2}}}(\alpha)=0$. Therefore, either $\alpha$ is a circular point at infinity or $f_d$ has a repeated factor. As both are excluded by hypothesis, it follows that none of the intersections of the curves lies on the line at infinity.
 
Now, $F_{z_{1}} \pm  iF_{ z_{2}}=0$ are of degree $d-1$ and $F=0$ is regular, so the curves $F=0$ and  $F_{z_{1}}^2 + F_{ z_{2}}^2=0$ have no common component and the result follows by Bézout's theorem.
\end{proof}

\section{Space curves}\label{sec:spacecurves}

Let $\{e_1,e_2,e_3\}$ be the standard basis in $\mathbb C^3$. The cross product of two vectors 
$z=(z_1,z_2,z_3)$ and $w=(w_1,w_2,w_3)$ is the vector
$$
z\times w=\left|
\begin{array}{ccc}
e_1&e_2&e_3\\
z_1&z_2&z_3\\
w_1&w_2&w_3
\end{array}\right|=(z_2w_3-z_3w_2,z_3w_1-z_1w_3,z_1w_2-z_2w_1).
$$

The \textbf{osculating plane} of a regular holomorphic space curve $\gamma:D\to\mathbb C^3$ at a point $t_0\in D$ is the plane through $\gamma(t_0)$ parallel to $\gamma'(t_0)$ and $\gamma''(t_0)$.  
This osculating plane is isotropic if and only if $\gamma'(t_0)\times \gamma''(t_0)$ is isotropic, equivalently,
$$
\langle \gamma'(t_0),\gamma'(t_0)\rangle \langle \gamma''(t_0),\gamma''(t_0)\rangle-
\langle \gamma'(t_0),\gamma''(t_0)\rangle^2=0.
$$

We have the following observation.
\begin{lem}\label{lem:isotPlane}
Suppose that $\gamma$ is parametrised by unit speed at a non-isotropic point $s_0$, and let $T$ be the unit vector of $\gamma$. 
Then the osculating plane $\pi$ at $s_0$ is isotropic 
if and only if $T'(s_0)$ is isotropic.
\end{lem}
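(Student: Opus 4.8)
The plan is to take the condition for the osculating plane to be isotropic, which by the discussion preceding the lemma reads
$$\langle \gamma'(s_0),\gamma'(s_0)\rangle \langle \gamma''(s_0),\gamma''(s_0)\rangle - \langle \gamma'(s_0),\gamma''(s_0)\rangle^2 = 0,$$
and collapse it to a single scalar equation by exploiting the unit speed hypothesis. Since $s_0$ is not an isotropic point, the curve admits a local unit speed reparametrisation by Theorem \ref{theo:unitspeed}, and in such a parametrisation $T=\gamma'$, so that $T'=\gamma''$.

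First I would record the two identities forced by unit speed. By definition $\langle T,T\rangle=\langle \gamma',\gamma'\rangle=1$, and differentiating this relation gives $\langle T',T\rangle=0$, that is $\langle \gamma'',\gamma'\rangle=0$. These are exactly the ingredients needed to simplify the Gram determinant above, since they fix the value of the first factor and kill the cross term.

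Next I would substitute $\langle \gamma',\gamma'\rangle=1$ and $\langle \gamma',\gamma''\rangle=0$ into the isotropy condition. The squared cross term vanishes and the leading factor equals $1$, so the whole expression reduces to $\langle \gamma'',\gamma''\rangle=\langle T',T'\rangle$. Hence the osculating plane is isotropic if and only if $\langle T'(s_0),T'(s_0)\rangle=0$, which is precisely the assertion that $T'(s_0)$ is an isotropic vector.

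The computation is immediate, so there is no serious obstacle; the only point deserving care is that the notion of \emph{isotropic} is reserved for non-zero vectors. I would therefore observe that for the osculating plane to be defined at all, $\gamma'(s_0)$ and $\gamma''(s_0)=T'(s_0)$ must be linearly independent, so in particular $T'(s_0)\ne 0$. This ensures that the reduced condition $\langle T'(s_0),T'(s_0)\rangle=0$ genuinely expresses the isotropy of the non-zero vector $T'(s_0)$, rather than the degenerate situation $T'(s_0)=0$.
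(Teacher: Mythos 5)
Your proof is correct, but it takes a different route from the paper's. You reduce the lemma to the scalar criterion $\langle \gamma',\gamma'\rangle\langle \gamma'',\gamma''\rangle-\langle \gamma',\gamma''\rangle^2=0$ stated just before the lemma (the Lagrange identity for the cross product), and then observe that the unit speed identities $\langle \gamma',\gamma'\rangle=1$ and $\langle \gamma',\gamma''\rangle=0$ collapse it to $\langle T'(s_0),T'(s_0)\rangle=0$, which settles both directions of the equivalence in one computation. The paper instead argues synthetically and separately in each direction: for the forward implication it takes an isotropic normal $v$ of $\pi$ with $\langle T(s_0),v\rangle=0$, writes $T'(s_0)=\alpha T(s_0)+\beta v$, and uses $\langle T',T\rangle=0$ to force $\alpha=0$; for the converse it checks directly that $T'(s_0)$ is orthogonal to the whole plane spanned by $T(s_0)$ and $T'(s_0)$, exhibiting $T'(s_0)$ itself as an isotropic normal. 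Both arguments pivot on the same identity $\langle T',T\rangle=0$. Your version is shorter and self-evidently symmetric in the two directions, at the cost of invoking the cross-product criterion as a black box; the paper's version is self-contained and yields the extra geometric information that $T'(s_0)$ is a normal vector to the isotropic osculating plane, which is the picture underlying the Frenet construction that follows. Your closing remark that the osculating plane is only defined when $\gamma'(s_0)$ and $\gamma''(s_0)$ are independent, so that $T'(s_0)\ne 0$ and the condition $\langle T'(s_0),T'(s_0)\rangle=0$ expresses genuine isotropy rather than vanishing, is a careful touch that the paper only gestures at in its parenthetical about $\beta\ne 0$.
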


\begin{proof}
Assume first that $\pi$ is isotropic. Then $\pi$ has an isotropic normal vector 
$v\nparallel T(s_0)$, so it is generated by $T(s_0)$ and
$v$, with $\langle T(s_0),v\rangle=0$. 

As $T'(s_0)$ is parallel to $\pi$, we can write $T'(s_0)=\alpha T(s_0)+\beta v$ for some $\alpha,\beta\in \mathbb C$.
Differentiating the identity $\langle T,{T}\rangle=1$ gives 
 $\langle T',T\rangle=0$, hence $\alpha=0$. Thus, $T'(s_0)=\beta v$ 
 which is isotropic. 
(We are assuming $\pi$ to be an isotropic plane, so $\beta\ne 0$.)
 
Conversely, suppose that $T'(s_0)$ is an isotropic vector. For any 
$\alpha,\beta\in \mathbb C$, we have $\langle \alpha T(s_0)+\beta T'(s_0),T'(s_0)\rangle=0$.
Thus, $\pi$ is an isotropic plane with normal $T'(s_0)$.
\end{proof}

Struik (\cite[Section 1–12]{Struik}) describes space curves that are isotropic at every point.
He also presents a proof of a theorem, attributed to E. Study, stating that if the osculating plane of a space curve is isotropic at every point, then the curve is either an isotropic curve or lies entirely in an isotropic plane.

In this section, we consider holomorphic space curves without isotropic points and whose osculating plane is nowhere isotropic. We parametrise the curve by unit speed (Theorem \ref{theo:unitspeed}).

Under these assumptions, and by Lamma \ref{lem:isotPlane}, there exist a scalar function $\kappa$ and a unit vector $N$ such that
$$
T'(s)=\kappa(s) N(s).
$$

Observe that  $\kappa=\langle T',{N}\rangle$ is a holomorphic function, and $\kappa(s)\ne 0$ for all $s\in D$.

From $\langle T'(s),{T(s)}\rangle=0$, we get
$\kappa(s)\langle N(s),{T(s)}\rangle=0$. Therefore, 
$$
\langle N(s),{T(s)}\rangle=0.
$$

It follows that $T$ and $N$ are orthogonal. We call the vector $N(s)$ the {\bf unit normal vector} to $\gamma$ at $s$ and call $\kappa(s)$ the {\bf curvature} of $\gamma$ at $s$.

We define the {\bf binormal vector} $B(s)$ of $\gamma$ at $s$ as the vector 
$
B(s)=T(s)\times N(s)$. 

It follows from the properties of determinants that the vectors $T(s),N(s),B(s)$ form an orthonormal basis of $\mathbb C^3$, for all $s$. 
We call the frame $\{T,N,B\}$ the \textbf{Frenet-Serret frame} of $\gamma$.

We have $B'=T'\times N+T\times N'= T\times N'$, so 
$\langle B',{T}\rangle=\langle T\times N',{T}\rangle=0.$

Writing $B'=\lambda_1 T+\lambda_2 N+\lambda_3 B$, it follows 
from $\langle B',{T}\rangle=0$, and from the orthogonality of the vectors of the frame, that $\lambda_1=0$.

From $\langle B,B\rangle=1$, we get $\langle B',B\rangle=0$. 
It follows that
$\langle \lambda_2 N+\lambda_3 B,B\rangle=\lambda_3=0$. Therefore, $B'$ is parallel to $N$. We write
$$
B'(s)=\tau(s) N(s),
$$
and call $\tau(s)$ the {\bf torsion} of $\gamma$ at $s$. Clearly, $\tau=\langle B',{N}\rangle$ is a holomorphic function.

One can show that $T=N\times B$ and $N=B\times T$.
Then, 
$$
N'=B'\times T+B\times T'=-\tau B-\kappa T.
$$ 

\begin{prop} The Frenet-Serret formulae for the moving frame $T,N,B$ are:
$$
\left\{ \begin{array}{rcl}
T'&=&\kappa N,\\
N'&=&-\kappa T-\tau B,\\
B'&=&\tau N.\\
\end{array}
\right.
$$	
\end{prop}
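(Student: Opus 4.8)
The plan is to assemble the three formulae from the relations already established for the frame $\{T,N,B\}$, supplying the one genuine computation, namely that for $N'$. The first equation $T'=\kappa N$ holds by the very definition of the curvature $\kappa$ and the unit normal $N$; these are well defined precisely because the standing hypotheses (no isotropic points, nowhere isotropic osculating plane) guarantee, via Lemma \ref{lem:isotPlane}, that $T'$ is non-isotropic and hence can be normalised. Likewise, the third equation $B'=\tau N$ is the definition of the torsion $\tau$, justified by the preceding argument that $B'$ is orthogonal to both $T$ and $B$ and so is a scalar multiple of $N$.

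The only formula requiring work is the middle one. First I would record the cyclic cross-product relations for the orthonormal frame: from the definition $B=T\times N$ we get $N\times T=-B$, and from the identity $T=N\times B$ already noted in the text we get $B\times N=-T$. These rest on the usual determinant identities, and the point to confirm is that they persist when orthonormality is measured by the complex bilinear form \eqref{eq:holomorphicscprod} rather than the Hermitian product; they do, since the cross product is defined by the same formal determinant and the inner product is symmetric and complex bilinear.

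With these relations in hand, differentiating $N=B\times T$ and applying the product rule gives
$$
N'=B'\times T+B\times T'=(\tau N)\times T+B\times(\kappa N)=\tau(N\times T)+\kappa(B\times N)=-\tau B-\kappa T,
$$
which is the desired $N'=-\kappa T-\tau B$. Collecting the three identities then yields the stated system.

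The main obstacle, such as it is, lies not in the computation but in the legitimacy of the frame itself in the complex setting: everything rests on $N$ being a genuine unit vector and on $\{T,N,B\}$ forming an orthonormal basis at every $s$. I would therefore emphasise that these facts are exactly what the standing hypotheses and Lemma \ref{lem:isotPlane} secure, since in the holomorphic metric a non-zero vector admits no unit normalisation whenever it is isotropic; away from this degeneracy, the classical real-case argument transfers essentially verbatim.
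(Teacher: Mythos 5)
Your proposal is correct and follows essentially the same route as the paper: the first and third formulae are taken as the definitions of $\kappa$ and $\tau$ (with the frame's existence secured by the standing hypotheses and Lemma~\ref{lem:isotPlane}), and $N'$ is obtained by differentiating $N=B\times T$ and using $B'=\tau N$, $T'=\kappa N$ together with the cyclic cross-product relations. Your added remark that the triple-product identities persist for the complex bilinear form is a sound observation that the paper leaves implicit in its ``one can show that $T=N\times B$ and $N=B\times T$''.
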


Of course one can define the Frenet-Serret frame, the curvature and torsion for regular holomorphic curves not parametrised by unit speed, assuming that they are without isotropic points and that the osculating plane is not isotropic at all points.
Then,  
$$
\kappa=\dfrac{\langle \gamma' \times \gamma'', {\gamma' \times \gamma''}\rangle^{\frac{1}{2}}}{\langle\gamma', {\gamma'}\rangle^{\frac{3}{2}}}   
\quad \mbox{and}  \quad 
\tau=-\frac{\langle \gamma' \times \gamma'', {\gamma'''}\rangle}{\langle \gamma' \times \gamma'', {\gamma' \times \gamma''}\rangle}.
$$

\begin{rem}\label{rem:tors}
{\rm 
It is worth observing that the torsion is a holomorphic function and does not dependent on the branch of the square root.
}
\end{rem}
\subsection{Contact with planes}\label{ss:contactPlanesCurves}

For a space curve $\gamma:D\to \mathbb C^3$, without isotropic points and with the assumption that the osculating plane is not isotropic at all points, 
we define the family of height functions on $\gamma$ as the holomorphic function
$H:D\times \mathbb CS^2\to \mathbb C$, where
$$
H(t,v)=\langle \gamma(t),{v}\rangle.
$$

We take, for simplicity, $\gamma$ to be a unit speed local parametrisation. 
Then, as shown above, $\kappa(s)\ne 0$ and  $T,N,B$ form an orthonormal frame.

The  height function along $v$ is defined as $H_v(s)=H(s,v)$ and measures the contact of the curve $\gamma$ with planes  orthogonal to $v$.

We have $H_v'=\langle T,{v}\rangle$, and it vanishes at $s_0$ if and only if $\langle v,{T(s_0)}\rangle=0$, that is, $v=\lambda N(s_0)+\mu B(s_0)$, for some $\lambda, \mu\in \mathbb C$, with $\lambda^2+\mu^2=1$.

Now $H_v''=\kappa \langle N,{v}\rangle$, so $H_v'(s_0)=H_v''(s_0)=0$ if and only if $v=\pm B(s_0)$. 

Differentiating again, we get $H_v'''=\kappa' \langle N,{v}\rangle-\kappa 
\langle \kappa T+\tau B,{v}\rangle$. Thus, $H_v'(s_0)=H_v''(s_0)=H_v'''(s_0)=0$ if and only if 
 $v=\pm B(s_0)$ and $\tau(s_0)=0$. With these conditions, one can show that  $H_v^{(4)}(s_0)\ne 0$ if and only if  $\tau'(s_0)\ne 0$

We call the plane orthogonal to $T(s)$ the {\bf normal plane} of $\gamma$ at $\gamma(s)$. We have the  following result (see \cite{BG} for real curves analogue).

\begin{theo}
Let $\gamma:D\to \mathbb C^3$ be a local parametrisation of a generic regular holomorphic curve without isotropic points and with the 
osculating plane not isotropic at all points.
Then, the height function $H_v$ can only have singularities of type $A_1,A_2,A_3$ and these occur at a given point $t_0$ when:

$A_1$ $\iff$ $v$ belongs to the normal plane of $\gamma$ at $t_0$ and $v \nparallel (\gamma'\times \gamma'')(t_0)$.

$A_2$ $\iff$ $v\parallel (\gamma'\times \gamma'')(t_0)$ and $\tau(t_0)\ne 0.$

$A_3$ $\iff$ $v\parallel (\gamma'\times \gamma'')(t_0)$, $\tau(t_0)=0$ and $\tau'(t_0)\ne 0$.

The above singularities of $H_v$ are $\mathcal R^+$-versally unfolded by the family $H$.
\end{theo}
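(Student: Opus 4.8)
The classification of the singularity types is essentially already contained in the derivative computations recorded just before the statement, so the plan is to read off the types, then establish genericity, and finally verify versality. Recall that a holomorphic function $g$ of one variable has an $A_k$-singularity at $s_0$ exactly when $g'(s_0)=\cdots=g^{(k)}(s_0)=0$ and $g^{(k+1)}(s_0)\ne 0$, the additive constant being irrelevant under $\mathcal R^+$. Applying this to $H_v$, and substituting $H_v'=\langle T,v\rangle$, $H_v''=\kappa\langle N,v\rangle$, and $H_v'''=\kappa'\langle N,v\rangle-\kappa\langle \kappa T+\tau B,v\rangle$, together with $\kappa(s_0)\ne 0$ (guaranteed by the osculating plane being non-isotropic at every point) and the identity $\gamma'\times\gamma''=\kappa B$ valid in the unit-speed case, produces precisely the three stated equivalences; the $A_3$ case uses in addition the computation $H_v^{(4)}(s_0)\ne 0\iff\tau'(s_0)\ne 0$ already established. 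Throughout one takes $\gamma$ unit speed and works at the critical direction $v_0$, which is in the normal plane for $A_1$ and equals $\pm B(s_0)$ for $A_2$ and $A_3$.

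To show that no worse singularity occurs for a generic curve, I would observe that an $A_{\ge 4}$-point would force $v\parallel B(s_0)$ together with $\tau(s_0)=\tau'(s_0)=0$. The simultaneous vanishing of $\tau$ and $\tau'$ imposes two independent conditions along the one-parameter curve, hence is avoided for generic $\gamma$ by a standard jet-transversality argument applied to the map $s\mapsto(\tau(s),\tau'(s))$. Consequently only $A_1$, $A_2$, $A_3$ can arise.

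For versality I would invoke the $\mathcal R^+$-versality criterion: writing $f=H_{v_0}$, the unfolding $H$ is $\mathcal R^+$-versal at $(s_0,v_0)$ if and only if the initial velocities $\partial H/\partial v_i(\cdot,v_0)$, for local coordinates $v_i$ on $\mathbb CS^2$ at $v_0$, project onto a spanning set of the quotient $\mathcal{E}_s/(\langle f'\rangle+\mathbb{C}\cdot 1)$, where $\mathcal{E}_s$ is the local ring of holomorphic function-germs at $s_0$. Since $H(s,v)=\langle\gamma(s),v\rangle$ is linear in $v$, these velocities are exactly the germs $s\mapsto\langle\gamma(s),w\rangle$ with $w$ ranging over the tangent space $T_{v_0}\mathbb CS^2=v_0^{\perp}$. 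In the $A_2$ and $A_3$ cases $v_0^{\perp}=\mathrm{span}\{T(s_0),N(s_0)\}$, and Taylor-expanding with the Frenet--Serret formulae gives $\langle\gamma,T(s_0)\rangle=c+(s-s_0)+O((s-s_0)^3)$ and $\langle\gamma,N(s_0)\rangle=c'+\tfrac{\kappa(s_0)}{2}(s-s_0)^2+O((s-s_0)^3)$. The Jacobian ideal $\langle f'\rangle$ equals $\langle (s-s_0)^2\rangle$ for $A_2$ and $\langle (s-s_0)^3\rangle$ for $A_3$, so modulo constants and this ideal the two velocities represent $(s-s_0)$ and $(s-s_0)^2$, which span the relevant quotient in each case; for $A_1$ the quotient is trivial and versality is automatic.

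The genuinely delicate step is the versality check for $A_3$, where the $\mathcal R^+_e$-codimension equals $2$, exactly the dimension of the parameter space $\mathbb CS^2$, leaving no slack: one must confirm that the two velocity directions contribute independently in the quotient. The computation itself is clean, because $H$ is linear in $v$ and the Frenet frame diagonalises the pairings $\langle\gamma^{(j)}(s_0),T(s_0)\rangle$ and $\langle\gamma^{(j)}(s_0),N(s_0)\rangle$; the point requiring care is to restrict the velocities to the tangent space $v_0^{\perp}$ of the constrained parameter $v\in\mathbb CS^2$, rather than differentiating freely over all of $\mathbb C^3$.
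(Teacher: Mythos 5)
Your proposal is correct and takes essentially the same route as the paper: the paper's proof consists precisely of the derivative computations $H_v'=\langle T,v\rangle$, $H_v''=\kappa\langle N,v\rangle$, $H_v'''=\kappa'\langle N,v\rangle-\kappa\langle\kappa T+\tau B,v\rangle$ and $H_v^{(4)}(s_0)\ne 0\iff\tau'(s_0)\ne 0$ recorded just before the statement, with $\kappa\ne 0$ coming from the non-isotropic osculating plane and with genericity and versality delegated to the real-case reference \cite{BG}. Your explicit versality verification (the velocities $\langle\gamma(\cdot),T(s_0)\rangle$ and $\langle\gamma(\cdot),N(s_0)\rangle$ representing $(s-s_0)$ and $\tfrac{\kappa(s_0)}{2}(s-s_0)^2$ modulo the Jacobian ideal and constants, correctly restricted to $T_{v_0}\mathbb CS^2=v_0^{\perp}=\mathrm{span}\{T(s_0),N(s_0)\}$) is sound and merely fills in the detail the paper leaves to that citation.
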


\begin{rem}
{\rm 
Here too one can consider the contact of a space curve with isotropic planes 
by defining locally the family of height functions 
as $H:D\times \mathbb C P^2\to \mathbb C$, with $H(t,\bar{v})=\langle \gamma(t),{v}\rangle$ and $v\in \mathbb C^3$ any representative of $\bar{v}$.
}
\end{rem}

 
\subsection{Contact with lines}\label{ss:contactLinesCurves}

We defined the {\bf orthogonal projection} along $v\in \mathbb CS^2$ as the parallel projection along $v$ to the plane through the origin and orthogonal to $v$.

The orthogonal projection $P_v(p)=p+\lambda v$ of a point $p\in \mathbb C^3$ along $v$ satisfies 
$\langle p+\lambda v,{v}\rangle=0$, so $\lambda =-\langle p,{v}\rangle$.

Observe that the  orthogonal plane to  $v$ can be identified with the tangent plane to the unit circle $\mathbb CS^2$ at $v$. 
By varying 
$v$ in $\mathbb CS^2$, we obtain the {\bf family of orthogonal projections} $P:\mathbb C^3\times \mathbb CS^2\to T\mathbb CS^2$, given by
$$
P(p,v)=(v,P_v(p))=(v, p-\langle p,{v}\rangle v).
$$

Clearly, $P$ is a holomorphic map. The family of orthogonal projections of a space curve $\gamma$ is the restriction of $P$ to $\gamma$. 

We take $\gamma$ without isotropic points and with the 
osculating plane not isotropic at all points. We also assume for simplicity that it is a unit speed parametrisation. 

The orthogonal projection $P_v(s)=\gamma(s)-\langle \gamma(s),{v}\rangle v$ 
can be considered locally at $s_0\in D$ as a map-germ $(\mathbb C,s_0)\to (\mathbb C^2, P_v(\gamma(s_0)))$. 

We have
$$
P_v'(s)=T(s)-\langle T(s),{v}\rangle v.
$$

This means that $P_v$ is singular at $s_0$ if and only if $v=\pm T(s_0)$. Differentiating again gives
$$
P_v''(s)=\kappa(s)(N(s)-\langle N(s),{v}\rangle v).
$$

Therefore, at a singularity $s_0$ of $P_v$, we have $P_v''(s_0)=\kappa(s_0)N(s_0)$. It follows that the singularity of the defining equation of $P_v(\gamma)$ is of type $A_k$ (since $\kappa(s_0)\ne 0$). 

We have 
$$
P_v'''(s)=\kappa'(s)[N(s)-\langle N(s),{v}\rangle v] -\kappa(s)[\kappa(s)T(s)+\tau(s)B(s)-\langle \kappa(s)T(s)+\tau(s)B(s),{v}\rangle v],
$$
so at a singularity $s_0$ of $P_v$ we have
$$
P_v'''(s_0)=\kappa'(s_0)N(s_0) -\kappa(s_0)\tau(s_0)B(s_0).
$$

The vectors $P_v''(s_0)$ and $P_v'''(s_0)$ are linearly independent if and only if $\tau(s_0)\ne 0$.
In that case, the singularity of $P_v$ at $s_0$ is $\mathcal A$-equivalent to the cusp $s\mapsto (s^2,s^3)$ (i.e., it is of type $A_2$).

With  $v=\pm T(s_0)$ and $\tau(s_0)=0$,  we have 
$$
\begin{array}{rcl}
P_v^{(4)}(s_0)&=&(\kappa''(s_0)-\kappa^3(s_0))N(s_0) -\kappa(s_0)\tau'(s_0)B(s_0),\\
P_v^{(5)}(s_0)&=&(\kappa'''(s_0)-6\kappa'(s_0)\kappa^2(s_0))N(s_0) -(3\kappa'(s_0)\tau'(s_0)+\kappa(s_0)\tau''(s_0))B(s_0).
\end{array}
$$

The singularity of $P_v$ is ${\mathcal A}$-equivalent to $s\mapsto  (s^2,s^5)$ (i.e., it is of type $A_4$) if and only if $(\kappa'\tau'-3\kappa\tau'')(s_0)\ne 0$.

We have the following results (see \cite{David} for the real case analogue).

\begin{theo}\label{theo:ProjSpCurves}
	Let $\gamma:D\to \mathbb C^3$ be a local parametrisation of a generic regular holomorphic curve without isotropic points and with the 
osculating plane not isotropic at all points.
	Then, the orthogonal projection $P_v$ along $v$ can only have local singularities of type $A_2$ or 
	$A_4$. We get, at $t_0$, a singularity of type 
	
	$A_2$ $\iff$ $v$ is tangent to $\gamma$ at $t_0$ and $\tau(t_0)\ne 0.$
	
	$A_4$ $\iff$ $v$ is tangent to $\gamma$ at $t_0$, $\tau(t_0)=0$ and $(\kappa'\tau'-3\kappa\tau'')(t_0)\ne 0$.
	
	The above singularities are $\mathcal A_e$-versally unfolded by the family $P$.
\end{theo}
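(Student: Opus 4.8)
The plan is to build directly on the derivative computations carried out just before the statement. Those show that $P_v$ is singular at $s_0$ precisely when $v=\pm T(s_0)$, and that at such a point the first non-vanishing derivative $P_v''(s_0)=\kappa(s_0)N(s_0)$ is non-zero, since $\kappa$ never vanishes. Hence every singularity of $P_v$ is a corank one mono-germ $(\mathbb C,s_0)\to(\mathbb C^2,\cdot)$ with $P_v''(s_0)\neq 0$, so it belongs to the cusp series, and the whole task reduces to (i) identifying which member of the series occurs, (ii) showing that for a generic $\gamma$ only $A_2$ and $A_4$ appear, and (iii) verifying the $\mathcal A_e$-versality of $P$.

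For (i) I would reduce to normal form as follows. Since $P_v$ projects onto the plane $v^\perp$, which at a singularity $v=T(s_0)$ is spanned by the orthonormal pair $N(s_0),B(s_0)$, write $P_v(\gamma(s))=\xi(s)N(s_0)+\eta(s)B(s_0)$. The computations already give $\xi''(s_0)=\kappa(s_0)\neq0$ and $\eta''(s_0)=0$, so $\xi$ is a Morse germ and one may reparametrise the source holomorphically by $u$ with $\xi=\xi(s_0)+u^2$; the even powers of $u$ in $\eta$ are then removable by a target coordinate change, and the $\mathcal A$-type is governed by the lowest odd-order coefficient of $\eta$. Reading off the $B$-components of $P_v^{(k)}(s_0)$ recorded above, this coefficient is a non-zero multiple of $\tau(s_0)$ at order three, giving $A_2$ when $\tau(s_0)\neq0$; and, when $\tau(s_0)=0$, a non-zero multiple of $(\kappa'\tau'-3\kappa\tau'')(s_0)$ at order five, giving $A_4$. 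The even-order terms, discarded in passing to the normal form, feed back into this odd coefficient through the non-linear reparametrisation, which is exactly why the fifth-order condition is $(\kappa'\tau'-3\kappa\tau'')$ rather than the bare $B$-component $-(3\kappa'\tau'+\kappa\tau'')$ of $P_v^{(5)}(s_0)$. This recovers the stated characterisations.

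For (ii), note that $\tau$ is a holomorphic function on the one-dimensional domain $D$, so for a generic $\gamma$ its zeros are isolated, and at each such zero generically $(\kappa'\tau'-3\kappa\tau'')\neq0$; requiring both $\tau=0$ and $(\kappa'\tau'-3\kappa\tau'')=0$ at one point imposes two independent conditions on a one-parameter family and is excluded by a Thom transversality argument on the jets of $\gamma$. Thus no singularity beyond $A_4$ occurs. For (iii) I would apply the infinitesimal $\mathcal A_e$-versality criterion, using the identification $T_v\mathbb CS^2\cong v^\perp$ supplied by the definition of $P$. Differentiating $P_v(p)=p-\langle p,v\rangle v$ in two independent directions $w\in v^\perp$ yields the two initial velocities of the unfolding; projecting these into the $\mathcal A_e$-normal space, which is spanned by $(0,u)$ for $A_2$ and by $(0,u),(0,u^3)$ for $A_4$, one checks that they generate it, using $\kappa(s_0)\neq0$ together with $\tau(s_0)\neq0$ in the $A_2$ case and the non-degeneracy condition in the $A_4$ case.

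The main obstacle I anticipate is the versality verification at an $A_4$ point: there the $\mathcal A_e$-codimension equals two, matching exactly the dimension of $\mathbb CS^2$, so versality is tight and one must show that the two initial velocities, reduced modulo the tangent space to the $\mathcal A$-orbit of $(u^2,u^5)$, are linearly independent in the two-dimensional normal space. Carrying out this reduction, by re-expressing $\partial P/\partial v$ in the reparametrised coordinate $u$ and discarding the orbit-tangent contributions, is the one genuinely computational step, and it is precisely here that the condition $(\kappa'\tau'-3\kappa\tau'')(s_0)\neq0$ is invoked a second time to guarantee non-degeneracy.
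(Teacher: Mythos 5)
Your proposal is correct and follows essentially the same route as the paper: the paper's proof consists precisely of the Frenet-frame computations of $P_v',\dots,P_v^{(5)}$ recorded just before the theorem, identification of the singularity type from the lowest odd-order coefficient of the $B$-component after normalising the $N$-component to $u^2$, and genericity plus $\mathcal A_e$-versality handled as in the real case of \cite{David}. Your reparametrisation bookkeeping --- showing that the fourth-order term $-\kappa\tau'/4!$ feeds through the non-linear change $h\mapsto u$ into the fifth-order coefficient, converting the raw component $-(3\kappa'\tau'+\kappa\tau'')$ of $P_v^{(5)}(s_0)$ into the stated condition $(\kappa'\tau'-3\kappa\tau'')(s_0)\ne 0$ --- is exactly the step the paper leaves implicit, and your account of it checks out.
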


\begin{rem}
{\rm 
The torsion is a holomorphic function and does not depend on the choice of the branch of the square root (Remark \ref{rem:tors}). 
The numerator of the function $\kappa'\tau'-3\kappa\tau''$ is holomorphic, so the results in Theorem \ref{theo:ProjSpCurves} do not depend on the choice of the branch of the square root and do not require the curve to be parametrized by unit speed.
}
\end{rem}


\section{Surfaces in $\mathbb{C}^3$} \label{sec:surfaces}

Let $M$ be a surface in $\mathbb C^3$, that is, a complex two-dimensional submanifold of $\mathbb C^3$. We study the extrinsic geometry of $M$ and work locally at a given point on $M$. We take a  
local parametrisation $\phi: U\to \mathbb C^3$ of $M$ at that point. 
The map $\phi$ is a regular holomorphic map, and we write $M=\phi(U)$. 

The {\bf first fundamental form} of $M$ at $p=\phi(q)\in M$ is the quadratic form 
$I_p(v)=\langle v,{v}  \rangle$, for all $v\in T_pM$.
We denote 
$$
E=\langle \phi_{z_1},{\phi_{z_1}} \rangle,\quad
F=\langle\phi_{z_1},{\phi_{z_2}}\rangle,\quad
G=\langle \phi_{z_2},{\phi_{z_2}}\rangle,
$$
the coefficients of the first fundamental form $I$. Then, for any $v=a\phi_{z_1}(q)+b\phi_{z_2}(q)\in T_pM$, we have 
$$
I_p(v)=a^2E+2ab F+b^2G.
$$

There may exist points on the surface where the quadratic form $I_p$ is degenerate, that is, points $p=\phi(q)$ such that
$$
\delta(q)=(EG-F^2)(q)=0.
$$ 
We call the set of such points the \textbf{isotropic locus ($IL$)} of 
$M$, and the points on this set are called \textbf{isotropic points}.
We identify the $IL$ with its preimage in $U$, so that
$$
IL=\{(z_1,z_2)\in U: (EG-F^2)(z_1,z_2)=0\}.
$$

In fact, a point $p$ is isotropic if and only if the tangent plane $T_pM$ is isotropic. Surfaces whose points are all isotropic are the isotropic planes, isotropic cylinders, isotropic cones, or tangent surfaces to isotropic curves (see \cite[Sections 5–6]{Struik}).

As pointed out previously, our aim is to study the geometry of generic submanifolds, so we make the following assumption.

\medskip
{\bf Assumption.} We assume that the isotropic locus $IL$ of $M$ is either empty or consists of a regular curve on $M$.

\medskip

At points $p=\phi(z_1,z_2)$ on $M\setminus IL$, the vector $\phi_{z_1}\times \phi_{z_2}$ is non-isotropic and  orthogonal to $T_pM$. 
We shrink $U$ if necessary, choose a branch of the square root function and define the
{\bf Gauss map} $N:M\setminus IL\to \mathbb CS^2$,
by 
$$
N(z_1,z_2)=\frac{\phi_{z_1}\times \phi_{z_2}}{\langle \phi_{z_1}\times\phi_{z_2}, {\phi_{z_1}\times\phi_{z_2}}\rangle ^{\frac12}}(z_1,z_2).
$$ 

As in the case of curves, all the geometric features of the surface derived from the Gauss map $N$ are independent of the choice of the branch of the square root function used in the definition of $N$.

Differentiating the identity $\langle N, {N}\rangle =1$, gives 
$\langle N_{z_1}, {N}\rangle =\langle N_{z_2}, {N}\rangle =0$. It follows that 
$-dN_p:T_pM\to T_{N(p)}\mathbb CS^2\simeq T_pM$ is a linear operator on $T_pM$, called the {\bf shape operator} of the surface.

From $\langle \phi_{z_1}, {N}\rangle = \langle \phi_{z_2}, {N}\rangle=0$, we get 
$$
\begin{array}{rcl}
\langle \phi_{z_1z_2}, {N}\rangle+ \langle \phi_{z_1}, {N_{z_2}}\rangle&=&0,\\
\langle \phi_{z_2z_1}, {N}\rangle+ \langle \phi_{z_2}, {N_{z_1}}\rangle&=&0.
\end{array}
$$

It follows that $\langle \phi_{z_1}, {N_{z_2}}\rangle=\langle \phi_{z_2}, {N_{z_1}}\rangle$, so  
$\langle dN_p(\phi_{z_1}),\phi_{z_2}\rangle =\langle dN_p(\phi_{z_2}),\phi_{z_1}\rangle$. It follows that 
 the bilinear form on $T_pM$, given by
$$
II_p(u,v)=\langle -dN_p(u), {v}\rangle,
$$
is symmetric. The quadratic form 
$II_p(v,v)$ is called the {\bf second fundamental form} of $M$, and is denoted $II_p(v)$. Its coefficients are given by
$$
\begin{array}{ccccccc}
	l&=&-\langle N_{z_1},\phi_{z_1}\rangle&=& \langle N,\phi_{z_1z_1}\rangle&=&\frac{\overline{l}}{\langle {\phi_{z_1}\times\phi_{z_2}}\rangle ^{\frac12}},\\
	m&=&-\langle N_{z_1},\phi_{z_2}\rangle&=& \langle N,\phi_{z_1z_2}\rangle&=&\frac{\overline{m}}{\langle{\phi_{z_1}\times\phi_{z_2}}\rangle ^{\frac12}},\\
	n&=&-\langle N_{z_2},\phi_{z_2}\rangle&=& \langle N,\phi_{z_2z_2}\rangle&=&\frac{\overline{n}}{\langle{\phi_{z_1}\times\phi_{z_2}}\rangle ^{\frac12}},
\end{array}
$$
where
$$
\begin{array}{ccc}
\overline{l}&=&
    \langle \phi_{z_1}\times \phi_{z_2},\phi_{z_1z_1}\rangle,\\
\overline{m}&=&
    \langle \phi_{z_1}\times \phi_{z_2},\phi_{z_1z_2}\rangle,\\
    \overline{n}&=&
    \langle \phi_{z_1}\times \phi_{z_2},\phi_{z_2z_2}\rangle
\end{array}
$$
are holomorphic functions that do not depend on the choice of the branch of the square root function.

For any $v=a\phi_{z_1}(q)+b\phi_{z_2}(q)\in T_pM$, we have 
$$
II_p(v)=a^2l+2ab m+b^2n.
$$

The direction $v\in T_pM$ is \textbf{asymptotic} if $II_p(v)=0$, equivalently,
$$
a^2\overline{l}+2ab \overline{m}+b^2\overline{n}=0.
$$

Following the same calculations as those for surfaces in the Euclidean space (see, for example, \cite{doCarmo}), the matrix of the shape operator  $-dN_p$, with respect to the basis $\phi_{z_1}, \phi_{z_2}$ of $T_pM$, is
$$
A_p=
\left(
\begin{array}{cc}
E&F\\
F&G
\end{array}
\right)^{-1}\left(
\begin{array}{cc}
	l&m\\
	m&n
\end{array}
\right).
$$

Since $II_p$ is a symmetric bilinear form, its matrix $A_p$ has two eigenvalues $\kappa_1, \kappa_2$ and two orthogonal eigenvectors $e_1, e_2$. We call  $\kappa_1, \kappa_2$ the {\bf principal curvatures} and 
$e_1, e_2$ the {\bf principal directions}. 
A direction $a\phi_{z_1}(q)+b\phi_{z_2}(q)\in T_pM$ is principal if and only if 
\begin{equation}
\left|
\begin{array}{ccc}
b^2&-ab&a^2\\
E&F&G\\
l&m&n
\end{array}
\right|=0,
\end{equation}
equivalently, 
\begin{equation}\label{eq:PrincipalDir}
\left|
\begin{array}{ccc}
b^2&-ab&a^2\\
E&F&G\\
\overline{l}&\overline{m}&\overline{n}
\end{array}
\right|=(F\overline{n}-G\overline{m})b^2 +(E\overline{n}-G\overline{l})ab+(E\overline{m}-F\overline{l})a^2=0.
\end{equation}

We denote by $K=\kappa_1\kappa_2=\det(A_p)$ the 
{\bf Gaussian curvature} of $M$.
We have
$$
K=\frac{ln-m^2}{EG-F^2}.
$$ 

Following the same arguments as for surfaces in the Euclidean 3-space (\cite{doCarmo}), one can show that the Gaussian curvature is an intrinsic property of $M\setminus IL$, and depends only on the first fundamental form. 

The set of points on $M$ where $K(p)=0$ (equivalently, $\overline{l}\overline{n}-\overline{m}^2=0$)
is called the {\bf parabolic set}. 

A point $p$ is called {\bf umbilic} if 
$\kappa_1(p)=\kappa_2(p).$ Umbilic points are also the points where the coefficients of the quadratic equation \eqref{eq:PrincipalDir} vanish.
The vanishing of two of these coefficients implies the vanishing of the third, so
such points are generically isolated.

To each non-umbilic point $p=\phi(q)$ are associated two \textbf{focal points} 
$c_i=\phi(q)+\frac{1}{\kappa_i(q)} N(q)$, $i=1$ or $i=2$.
As $q$ varies, the focal points trace the two sheets of the \textbf{focal set} of $M$. The \textbf{ridge} is the preimage on the surface of the singular set of the focal set.

\begin{rem}
{\rm
Clearly, the concepts of asymptotic and principal directions, as well as parabolic and umbilic points, are independent of the choice of branch of the square root function used to define the Gauss map.
One can also show that the focal points are independent of this choice.
}
\end{rem}

With the choice of the holomorphic metric \eqref{eq:holomorphicscprod}, we can study the contact of the surface $M\setminus IL$ 
with planes (these have zero Gaussian curvature), complex sphere (which have constant Gaussian curvature) and lines. The resulting families of mappings that measure the contact between $M$ with these objects are holomorphic (which is not the case when using the Hermitian inner product). 


\subsection{Contact with planes}\label{ss:contactPlanesSurf}
We define, as for plane curves, the family height functions on the image of a local parametrization  $\phi:U\setminus IL\to \mathbb C^3$ of a surface as the holomorphic function
$H:U\setminus IL\times \mathbb CS^2\to \mathbb C$, where
$$
H(q,v)=\langle \phi(q),{v}\rangle.
$$

The height function $H_v(q)=H(q,v)$ is singular at $q$ if and only if $v\parallel N(q)$. 
The singularity is of type $A_1$ if and only if $ln-m^2\ne 0$, that is, $q$ is not a parabolic point. 
We have the following result; see \cite[Chapter 6]{IRRT} for the real case analogue.

\begin{theo}\label{theo:CHeightfctSurf} 
For a generic complex surface  $M$ in $\mathbb C^3$, the height function $H_v$ on $M\setminus IL$ can have local singularities of type $A_1,A_2,A_3$ and these are $\mathcal R^+$-versally unfolded by the family of height functions. 
The singularities occur at a point $p\in M\setminus IL$ when:

$A_1$ $\iff$ $v\parallel N(p) $ and $p$ is not a parabolic point.

$A_2$ $\iff$ $v\parallel  N(p)$, $p$ is parabolic point and the unique  asymptotic direction at $p$ is transverse to the parabolic curve.

$A_3$ $\iff$ $v\parallel  N(p)$, $p$ is parabolic point and the unique  asymptotic direction at $p$ is tangent to the parabolic curve at $p$. 
\end{theo}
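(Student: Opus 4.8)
The plan is to reduce the classification to that of a single holomorphic function in Monge form, and then translate the algebraic conditions back into geometry. Since $p\in M\setminus IL$ the tangent plane $T_pM$ is non-isotropic, so I apply an isometry in $E(3,\mathbb C)$ (a translation followed by a complex rotation) to place $p$ at the origin with $T_pM=\{z_3=0\}$ and $N(p)=e_3$. Isometries preserve the Gauss map, the second fundamental form, the parabolic set and the asymptotic directions, so the classification is unaffected. In these coordinates $M$ is a graph $\phi(x,y)=(x,y,h(x,y))$ with $h(0)=0$, $dh(0)=0$, and the height function along $v=N(p)=e_3$ is simply $H_{e_3}(x,y)=h(x,y)$. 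Hence the singularity of $H_{N(p)}$ at $p$ is exactly that of $h$ at the origin. The first-order condition ($H_v$ singular iff $v\parallel N(p)$) and the $A_1$ case (non-parabolic, $ln-m^2\ne0$) are already established, so I focus on parabolic points.

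At the origin $E=G=1$, $F=0$ and $l=h_{xx}$, $m=h_{xy}$, $n=h_{yy}$, so $j^2h=\tfrac12(lx^2+2mxy+ny^2)$ is the second fundamental form. At a parabolic point $ln-m^2=0$, and away from the generically isolated flat umbilics not all of $l,m,n$ vanish, so the Hessian has rank one with kernel the unique asymptotic direction. After a linear change of coordinates aligning the non-degenerate eigendirection with $x$ and the asymptotic direction with $y$, the splitting lemma gives $h\sim_{\mathcal R^+}\tfrac12 a x^2+g(y)$ with $a\ne0$ and $g(y)=c_3y^3+c_4y^4+\cdots$. The type is then $A_2$ precisely when $c_3\ne0$ and $A_3$ precisely when $c_3=0$ and $c_4\ne0$.

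The geometric heart of the argument is to identify $c_3$. I would compute $c_3$ (equivalently $\tfrac16 g'''(0)$) from the $3$-jet of $h$ after the splitting reduction, and show that it equals, up to a nonzero factor, the directional derivative of the parabolic defining function $\overline l\,\overline n-\overline m^{\,2}$ along the asymptotic direction $y$ at the origin. Since the parabolic curve is the level set $\{\overline l\,\overline n-\overline m^{\,2}=0\}$, its tangent at $p$ is the kernel of $d(\overline l\,\overline n-\overline m^{\,2})$; thus the asymptotic direction is transverse to the parabolic curve exactly when this directional derivative is nonzero, i.e.\ when $c_3\ne0$, and tangent to it exactly when $c_3=0$. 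This gives the stated $A_2$/$A_3$ dichotomy. This step, requiring the differentiation of the parabolic condition and the bookkeeping of how the splitting lemma affects the cubic terms, is the main obstacle; everything else is bilinear algebra and standard singularity theory.

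It remains to address genericity and versality. For generic $M$ the holomorphic family $H$ is $\mathcal R^+$-versal, by the transversality theorem for holomorphic families of functions. Since $\dim\mathbb CS^2=2$ and the $\mathcal R^+$-codimension of $A_k$ (stably $x^{k+1}+y^2$) equals $k-1$, only $A_1$, $A_2$ and $A_3$ occur in a generic two-parameter family, while $A_4$ (codimension $3$) is excluded. Versality is checked by the infinitesimal criterion: the initial velocities $\partial H/\partial v_i|_{v=N(p)}$, together with the Jacobian ideal of $H_{N(p)}$ and the constants, generate the local ring $\mathcal E_2$; as $A_3$ needs exactly two unfolding parameters, the family over $\mathbb CS^2$ supplies precisely the required directions. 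These computations parallel the real case treated in \cite[Chapter 6]{IRRT}.
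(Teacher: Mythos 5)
Your proposal is correct and follows essentially the same route as the paper: the paper also puts $M$ in Monge form via an isometry of $E(3,\mathbb C)$, notes that $H_v$ is singular iff $v\parallel N(p)$ with $A_1$ iff $ln-m^2\ne 0$, and then delegates the remaining algebraic conditions and the versality check to the real-case computations of \cite[Chapter 6]{IRRT}, read with complex coefficients. The step you flag as the main obstacle is in fact routine and is exactly what that citation covers: with $j^3h=\tfrac a2 x^2+c_{30}x^3+c_{21}x^2y+c_{12}xy^2+c_{03}y^3$ one gets $d(\overline l\,\overline n-\overline m^{\,2})(0)=(2ac_{12},6ac_{03})$, so the directional derivative along the asymptotic direction $(0,1)$ is $6ac_{03}$ while the splitting lemma gives $c_3=c_{03}$, which yields your dichotomy directly (and the versality of the $A_3$ reduces to the generic condition $c_{12}\ne 0$, i.e.\ smoothness of the parabolic curve).
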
 

We can choose a suitable coordinate system in $\mathbb C^3$ and parametrised the surface $M$ locally at a given point $p_0\in M$ in Monge form $\phi(z_1,z_2)=(z_1,z_2,f(z_1,z_2))$, for some holomorphic function $f$ with zero 1-jet at the origin. The normal vector to $M$ at the origin is $(0,0,1)$ and nearby vectors in $\mathbb C S^2$ can be parametrised by $(v_1,v_2,1)$ with $(v_1,v_2)\in (\mathbb C^2,0)$. We get the germ of the family of height functions $H:(\mathbb C^2\times   \mathbb C^2,(0,0))\to (\mathbb C,0)$, given by
\begin{equation}\label{eq:CheightSurfMonge}
H(z_1,z_2,v_1,v_2)=z_1v_1+z_2v_2+f(z_1,z_2).
\end{equation}

The algebraic conditions on the Taylor expansion of $f$ at the origin for $H_{(0,0)}$ to have one of the singularities listed in Theorem \ref{theo:CHeightfctSurf}, and for these singularities to be $\mathcal R^+$-versally unfolded by the family $H$ in \eqref{eq:CheightSurfMonge}, 
are as given in \cite[Chapter 6]{IRRT} (interpreting the coefficients in  \cite{IRRT} as complex numbers).

\begin{rem}
{\rm 
The family of height functions can be defined at points on the $IL$. As the normal to the surface at such points
is isotropic, we define locally the family of height functions as 
$H:U\times \mathbb CP^2\to \mathbb C$, where
$H(q,\bar{v})=\langle \phi(q),v\rangle$ and $v$ is any representative in $\mathbb C^3$ of $\bar{v}$.
}
\end{rem}

\subsection{Contact with spheres}\label{ss:contactSpheSurf}

The contact of the surface  $M=\phi(U)$ at $p$ with the complex sphere of centre ${c}$ passing through $p$ is measured by the singularities of the (contact) function 
\begin{equation}
	d_{c}(q)=\langle \phi(q)-c, {\phi(q)-c}\rangle.
\end{equation}

The function $d_c$ is holomorphic, and we call it the {\bf distance squared function}. 

The family of distance squared functions
$d:U\times  \mathbb C^3\to \mathbb C$ is given by 
\begin{equation}
	d(q,c)=d_c(q)=\langle \phi(q)-c, {\phi(q)-c}\rangle.
\end{equation}

The function $d_c$ measures the contact of $M$ with the complex spheres of centre $c$.
We have the following result, analogous to that for surfaces in the Euclidean 3-space; see \cite[Chapter 6]{IRRT} for the real case analogue.

\begin{theo}\label{theo:Cd2fctSurf} 
	For a generic complex surface  $M$ in $\mathbb C^3$, the distance squared function $d_c$ on $M\setminus IL$ can have local singularities of type $A_1,A_2,A_3,A_4,D_4$ and these are $\mathcal R^+$-versally unfolded by the family of distance squared functions. 
	
	The singularities occur at a point $p=\phi(q)\in M\setminus IL$ when:
	
	$A_1:$ $c$ is on the normal line of $M$ at $p$.
	
	$A_2:$ $c$ is a focal point.
	
	$A_3:$  $c$ is a focal point and is a generic point on the ridge curve.
	
	$A_4:$ $c$ is a special point on the ridge curve.
	
	$D_4:$ $\kappa_1(q)=\kappa_2(q)$, that is, $p$ is a umbilic, and $c=1/{\kappa_1(p)} $. 
\end{theo}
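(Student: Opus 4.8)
The plan is to transport the real-case argument of \cite[Chapter 6]{IRRT} to the holomorphic setting, relying on the facts established above: $d$ is holomorphic, and the focal set, ridge and umbilics are well-defined and independent of the branch of the square root. Because $d_c$ is holomorphic, $\mathcal{R}^+$ is exactly the right equivalence and none of the real sign arguments are needed; in particular the two real forms $D_4^{\pm}$ coalesce into the single complex singularity $D_4$. The strategy is to place $M$ in Monge form, read off the singularity type of $d_c$ from the Taylor coefficients, match the degeneracy conditions to the geometric loci in the statement, and finally verify $\mathcal{R}^+$-versality by a transversality computation.

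First I would use a translation and a complex rotation in $E(3,\mathbb C)$ to write $\phi(z_1,z_2)=(z_1,z_2,f(z_1,z_2))$ with $f$ having zero $1$-jet at the origin and $2$-jet diagonalised as $\tfrac12(\kappa_1 z_1^2+\kappa_2 z_2^2)$, so that $z_1,z_2$ are principal directions and $\kappa_1,\kappa_2$ the principal curvatures at $p=\phi(0)$. Then $d_c(z_1,z_2)=(z_1-c_1)^2+(z_2-c_2)^2+(f-c_3)^2$, and computing the gradient shows the origin is a critical point precisely when $c_1=c_2=0$, that is, when $c$ lies on the normal line of $M$ at $p$. The Hessian at the origin is then $2\,\operatorname{diag}(1-c_3\kappa_1,\,1-c_3\kappa_2)$, which is non-degenerate exactly when $c_3\ne 1/\kappa_i$; this is the $A_1$ case, and the rank drops precisely when $c$ is a focal point $\phi(0)+\tfrac1{\kappa_i}N(0)$.

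Assuming $\kappa_1\ne\kappa_2$ and $c_3=1/\kappa_1$, the Hessian has a one-dimensional kernel along the first principal direction. Applying the splitting lemma reduces the classification to a one-variable residual function $g(z_1)$ whose successive derivatives I would express through the $3$- and $4$-jets of $f$: the singularity is $A_2$ when $g'''(0)\ne0$, $A_3$ when $g'''(0)=0\ne g^{(4)}(0)$, and $A_4$ when $g'''(0)=g^{(4)}(0)=0\ne g^{(5)}(0)$. The vanishing of the cubic coefficient $g'''(0)$ is Porteous's condition that $\kappa_1$ be critical along its own principal line, i.e. that $p$ lie on the ridge \cite{Porteous}; this gives $A_3$ on the ridge and $A_2$ off it, while the further vanishing of the quartic coefficient, together with the genericity condition, cuts out the $A_4$ locus of special ridge points. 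At an umbilic, $\kappa_1=\kappa_2$ and $c_3=1/\kappa_1$, the Hessian vanishes identically, so $d_c$ has corank $2$; its $3$-jet is the binary cubic in $(z_1,z_2)$ carried by the cubic part of $f$, which for a generic surface has three distinct roots, yielding $D_4$.

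For the versality assertion I would run the standard transversality check: the family is $\mathcal{R}^+$-versal if and only if the initial speeds $\partial d/\partial c_1,\partial d/\partial c_2,\partial d/\partial c_3$, evaluated at the critical point, together with the constant $1$ and the Jacobian ideal of $d_c$, span the $\mathcal{R}^+$-miniversal complement of the singularity in $\mathcal{E}_2$. Since $A_4$ and $D_4$ each have $\mathcal{R}^+$-codimension $3$ and $c$ ranges over $\mathbb C^3$, the numerology matches, and the computation is formally identical to the real one. I expect the main obstacle to be exactly the $D_4$ case and its versality: confirming that the three translates of $c$ unfold a corank-$2$ singularity versally requires showing the initial speeds reach all three independent directions of the $D_4$ miniversal complement, which is the most delicate piece of the bookkeeping, whereas the corank-$1$ cases $A_1,\dots,A_4$ follow by a direct one-variable computation.
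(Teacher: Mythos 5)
Your proposal matches the paper's approach: the paper gives no detailed proof, but justifies the theorem exactly as you do, by placing $M$ in Monge form (equation \eqref{eq:Cd2SurfMonge}), noting that the family $d$ is holomorphic so the real-case computations of \cite[Chapter 6]{IRRT} carry over verbatim with the Taylor coefficients of $f$ interpreted as complex numbers, and reading off the $A_1$--$A_4$, $D_4$ conditions and $\mathcal R^+$-versality from there. Your expansion of this — diagonalising the $2$-jet via $E(3,\mathbb C)$ at non-isotropic points, the splitting lemma for the corank-$1$ cases, Porteous's ridge condition, and the codimension-$3$ numerology for $A_4$ and $D_4$ — is a correct spelling-out of the same argument.
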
 

The structure of the focal set can be obtained from the fact that it is the local component of the bifurcation set of the family of distance-squared functions. Consequently, it is smooth at an $A_2$-singularity of $d_c$, diffeomorphic to a cuspidal edge at an $A_3$-singularity, and to a swallowtail at an $A_4$-singularity. 

As in the case of plane curves, the focal set of a surface $M\setminus IL$ extends to the $IL$. The family of distance-squared functions is well defined at points on the $IL$, and its bifurcation set provides an extension of the focal set to the $IL$. We obtain the following results, analogous to those for surfaces in the Minkowski 3-space \cite{causticsR31}.

\begin{prop}
Let $M$ be a generic complex surface in $\mathbb{C}^{3}$. Then one sheet of its focal set extends to a smooth complex surface along the $IL$ and is tangent to $M$ at that locus. The other sheet of the focal set is either smooth or a cuspidal edge surface.
\end{prop}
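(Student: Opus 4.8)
The plan is to resolve the $0\cdot\infty$ indeterminacy that the focal points $c_i=\phi+\tfrac{1}{\kappa_i}N$ present along the $IL$: there both the Gauss map $N=(\phi_{z_1}\times\phi_{z_2})/\delta^{1/2}$ and the principal curvatures $\kappa_i$ blow up as $\delta=EG-F^2\to 0$, so the individual factors are singular while their product need not be. Write $\nu=\phi_{z_1}\times\phi_{z_2}$, so that $\langle\nu,\nu\rangle=EG-F^2=\delta$ and $N=\nu/\delta^{1/2}$. Using $l=\overline{l}/\delta^{1/2}$, $m=\overline{m}/\delta^{1/2}$, $n=\overline{n}/\delta^{1/2}$, the characteristic equation $\delta\kappa^2-(En+Gl-2Fm)\kappa+(ln-m^2)=0$ of the shape operator becomes $\delta\kappa^2-\overline{H}\delta^{-1/2}\kappa+\overline{K}\delta^{-1}=0$, where $\overline{H}=E\overline{n}-2F\overline{m}+G\overline{l}$ and $\overline{K}=\overline{l}\,\overline{n}-\overline{m}^2$ are holomorphic. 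The substitution $s_i=1/(\kappa_i\delta^{1/2})$ then turns this into the holomorphic quadratic
$$\overline{K}\,s^2-\overline{H}\,s+\delta=0,$$
while the focal points take the manifestly holomorphic form $c_i=\phi+s_i\,\nu$. Trading the singular pair $(N,\kappa_i)$ for the regular data $(\nu,s_i)$ is the heart of the argument.

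Next I would extend the two sheets across the $IL$. The discriminant of the quadratic is $\overline{H}^2-4\overline{K}\delta$, which restricts to the perfect square $\overline{H}^2$ on the $IL$; hence at a generic point of the $IL$, where $\overline{H}\ne 0$, the roots
$$s_\pm=\frac{\overline{H}\pm\sqrt{\overline{H}^2-4\overline{K}\delta}}{2\overline{K}}$$
are holomorphic in a full neighbourhood, so the double cover of focal sheets does not ramify along the $IL$. On the $IL$ one reads off $s_+=\overline{H}/\overline{K}$ and $s_-=0$; rationalising gives $s_-=2\delta/(\overline{H}+\sqrt{\overline{H}^2-4\overline{K}\delta})$, so $s_-=\delta\,g$ with $g$ holomorphic and nonvanishing near the $IL$, and since the Assumption makes $IL$ a regular curve, $\delta$ (and hence $s_-$) vanishes to first order along it.

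For the first sheet $c_-=\phi+s_-\nu$, the vanishing of $s_-$ on the $IL$ gives $c_-|_{IL}=\phi$, so this sheet meets $M$ along the $IL$. Differentiating, $\partial_{z_j}c_-|_{IL}=\phi_{z_j}+(\partial_{z_j}s_-)\nu$; on the $IL$ the isotropic normal $\nu$ is orthogonal to $T_pM$ and, because $\delta=0$, actually lies in $T_pM$, say $\nu=a\phi_{z_1}+b\phi_{z_2}$ with $(a,b)$ the null direction of $I$. Thus $dc_-|_{IL}$ maps into $T_pM$, which is precisely the asserted tangency to $M$ along the $IL$; writing $s_-=\delta g$, a short determinant computation in the basis $\phi_{z_1},\phi_{z_2}$ shows $dc_-$ has rank two exactly when $1+g(a\delta_{z_1}+b\delta_{z_2})\ne 0$, a condition that holds for a generic surface, giving the smoothness of this sheet.

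The second sheet $c_+=\phi+s_+\nu$ has $s_+\to\overline{H}/\overline{K}\ne 0$ on the $IL$, so its tangent plane is no longer forced into $T_pM$, since $\partial_{z_j}c_+$ now involves $\nu_{z_j}=\phi_{z_1z_j}\times\phi_{z_2}+\phi_{z_1}\times\phi_{z_2z_j}$, which has a component off $T_pM$. Here I would view $c_+$ as a holomorphic front, namely a component of the bifurcation set of the family $d$ of Theorem \ref{theo:Cd2fctSurf}, and invoke the generic classification of such fronts exactly as in the Minkowski analogue \cite{causticsR31}: along the one-dimensional $IL$ the generic degeneracies of $dc_+$ are at worst cuspidal edges, yielding the stated dichotomy. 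I expect the main obstacle to be the resolution step itself, that is, recognising that the objects extending holomorphically across the $IL$ are $(\nu,s_i)$ rather than $(N,\kappa_i)$, together with the observation that the discriminant is a square on the $IL$ so that no ramification occurs; once this is in place, the tangency of the first sheet is a direct computation and the behaviour of the second sheet follows from the standard front classification.
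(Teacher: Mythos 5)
Your proposal is correct, and it is worth knowing that the paper itself offers no proof of this proposition: it merely observes that the family of distance-squared functions is holomorphic and well defined across the $IL$, that its bifurcation set extends the focal set, and that the result is analogous to the Minkowski case \cite{causticsR31}. Your argument supplies exactly the computation left implicit there, and it checks out. The resolution of the $0\cdot\infty$ indeterminacy by trading $(N,\kappa_i)$ for $(\nu,s_i)$ is sound: the Lagrange identity gives $\langle\nu,\nu\rangle=EG-F^2=\delta$ as a polynomial identity for the complex bilinear product; the characteristic equation $\delta\kappa^2-(En+Gl-2Fm)\kappa+(ln-m^2)=0$ does become $\overline{K}s^2-\overline{H}s+\delta=0$ under $s=1/(\kappa\delta^{1/2})$, with every root of $\delta$ cancelling, so $c_\pm=\phi+s_\pm\nu$ is manifestly holomorphic; the discriminant restricts to the perfect square $\overline{H}^2$ on the $IL$, so the sheets separate holomorphically at generic isotropic points; $s_-=\delta g$ with $g|_{IL}=1/\overline{H}$; on the $IL$ the isotropic normal $\nu$ spans the radical of $I_p$ and hence lies in $T_pM$, which yields the tangency; and your determinant $1+g(a\delta_{z_1}+b\delta_{z_2})$ is the correct regularity condition for the extended sheet. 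For the second sheet you fall back on the paper's implicit route (bifurcation set of $d$ plus the generic front classification), which is the appropriate move since $s_+=\overline{H}/\overline{K}\ne 0$ there. What your route buys over the paper's is a self-contained identification of \emph{which} sheet extends tangentially, \emph{why} (first-order vanishing of $s_-$ along the regular curve $\delta=0$), and an explicit open condition for its smoothness; what the paper's route buys is that genericity statements come for free from versality. Two bookkeeping points you should make explicit: the hypotheses $\overline{H}\ne 0$ and $\overline{K}\ne 0$ fail at generically isolated points of the $IL$ (note $\{\overline{K}=0\}$ is precisely the parabolic set, so your $s_+$ formula degenerates where the parabolic curve meets the $IL$), so your conclusions hold at generic points of the $IL$ or must be absorbed into the genericity of $M$; and for the dichotomy on the second sheet one should say that the ridge ($A_3$ locus) meets the complex curve $IL$ in isolated points while $A_4$ points generically avoid it, which is what rules out anything worse than a cuspidal edge along the $IL$.
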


If we take $M$ locally  in Monge form (as in \S\ref{ss:contactPlanesSurf}),
we get the germ of the family of distance squared functions
$d:(\mathbb C^2\times   \mathbb C^3,(0,(0,0,c_0)))\to (\mathbb C,0)$, given by
\begin{equation}\label{eq:Cd2SurfMonge}
	d(z_1,z_2,(a,b,c))=(z_1-a)^2+(z_1-a)^2+(f(z_1,z_2)-(c+c_0))^2.
\end{equation}

The algebraic conditions on the Taylor expansion of $f$ at the origin for $d_{(0,0,c_0)}$ to have one of the singularities listed in Theorem \ref{theo:Cd2fctSurf}, and for these singularities to be $\mathcal R^+$-versally unfolded by the family $d$ in \eqref{eq:Cd2SurfMonge}, are as given in \cite[Chapter 6]{IRRT} (interpreting the coefficients in  \cite{IRRT} as complex numbers).

\subsection{Contact with lines}\label{ss:contactLinesSurf}

The family of orthogonal projections in $\mathbb C^3$ is as given in \S\ref{ss:contactLinesCurves} and is as follows:
$$
\begin{array}{cccl}
	P:&\mathbb C^3\times \mathbb CS^2&\to& T\mathbb CS^2\\
	&(p,v)&\mapsto&P(p,v)=(v,P_v(p))=(v, p-\langle p,{v}\rangle v),
\end{array}
$$

\begin{theo}\label{theo:CProSurf} 
	For a generic complex surface  $M$ in $\mathbb C^3$, the orthogonal projection 
	$P_v$ on $M\setminus IL$ can have local singularities 
	of $\mathcal A_e$-codimension $\le 2$ and these are $\mathcal A_e$-versally unfolded by the family $P$of orthogonal projections on $M\setminus IL$. 
	\end{theo}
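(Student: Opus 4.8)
The plan is to follow the real theory of orthogonal projections of generic surfaces (as treated in \cite[Chapter 6]{IRRT}) and to check that each step transfers verbatim to the holomorphic setting. First I would restrict the family $P$ to $M\setminus IL$: taking $\phi$ in Monge form at a point $p_0=\phi(q_0)$ and choosing linear coordinates on the target plane $T_v\mathbb CS^2\simeq v^{\perp}$, the restriction $P_v\circ\phi$ becomes, for each $v$ in a two-dimensional neighbourhood in $\mathbb CS^2$, a holomorphic map-germ $(\mathbb C^2,q_0)\to(\mathbb C^2,\cdot)$. From the derivative $dP_v=\mathrm{id}-\langle\,\cdot\,,v\rangle v$ one reads off that $dP_v$ kills a tangent vector $u\in T_pM$ exactly when $u\parallel v$, so $P_v|_M$ is singular at $p$ precisely when $v\in T_pM$; the singular set of the family is thus governed by the tangent directions of $M$, exactly as in the Euclidean case. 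Hence $P|_M$ is a genuine two-parameter family of germs from the plane to the plane, the parameter space $\mathbb CS^2$ being two-dimensional.

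Second, I would invoke the known classification of $\mathcal A$-simple germs $(\mathbb C^2,0)\to(\mathbb C^2,0)$ of $\mathcal A_e$-codimension at most $2$ (the regular germ, fold, cusp, lips/beaks, swallowtail, goose and gulls), together with their normal forms and $\mathcal A_e$-codimensions. This classification coincides over $\mathbb R$ and $\mathbb C$, since finite determinacy and the computation of the normal space $\theta(P_v)/T\mathcal A_e P_v$ depend only on the local algebra over the ground field; the real sign distinctions, such as lips versus beaks, simply collapse over $\mathbb C$.

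The core of the argument is a genericity statement established by transversality. I would regard the finite-jet extension of the family $P|_M$ as a holomorphic map from $(M\setminus IL)\times\mathbb CS^2$ into a suitable jet space, and apply a parametrised transversality theorem over the space of surfaces $M$. For a residual set of $M$ this jet map is transverse to the $\mathcal A$-orbits; since both the source and the parameter space are two-dimensional, a dimension count then forces only orbits of $\mathcal A_e$-codimension $\le 2$ to be met, and meeting them transversally is precisely the condition that the geometric family $P$ be an $\mathcal A_e$-versal unfolding at each such germ. The algebraic conditions on the $k$-jet of $f$ that single out each singularity type, and the corresponding versality conditions, are those recorded in \cite[Chapter 6]{IRRT}, now read with complex coefficients.

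The step I expect to be the main obstacle is showing that the specific, geometrically constrained family $P$, rather than some arbitrary deformation, is $\mathcal A_e$-versal at the germs of codimension $1$ and $2$. This amounts to verifying that the initial-velocity fields $\partial P_v/\partial v_1$ and $\partial P_v/\partial v_2$, together with the constant field, project onto a basis of $\theta(P_v)/T\mathcal A_e P_v$. As in the real case this reduces to a rank condition on the jets of $\phi$, and one must confirm that it holds on a residual set of surfaces. Working throughout on $M\setminus IL$, where the Gauss map and the tangent-plane structure behave exactly as in the Euclidean setting, ensures that no extra degeneracies are introduced by isotropic directions, so the verification proceeds formally as over $\mathbb R$.
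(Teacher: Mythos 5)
Your proposal is correct and takes essentially the same route as the paper, which gives no separate proof of this theorem beyond putting the family $P$ in Monge form as in \eqref{eq:CProSurfMonge} and asserting that the real-case arguments and algebraic conditions of \cite[Chapter 6]{IRRT} transfer with complex coefficients --- precisely the transfer you spell out via the two-parameter dimension count, the jet-transversality genericity argument, and the observation that the classification of $\mathcal A_e$-codimension $\le 2$ germs $(\mathbb C^2,0)\to(\mathbb C^2,0)$ is the real one with sign distinctions (lips/beaks) collapsed. One minor slip that does not affect the argument: your parenthetical list of codimension $\le 2$ germs omits the butterfly $(x,xy+y^5\pm y^7)$, which has $\mathcal A_e$-codimension $2$ and does occur for generic surfaces, though your proof rests on the classification itself rather than on the enumerated list.
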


Take $M$ locally  in Monge form (as in \S\ref{ss:contactPlanesSurf}) and project along directions close to $v_0=(0,1,0)$ 
to a fixed plane orthogonal to $v_0$. The directions are parametrised by $(v_1,1,v_2)$ and 
the germ of the family of orthogonal projections  can be taken as 
$P:(\mathbb C^2\times   \mathbb C^2,(0,0))\to (\mathbb C^2,0)$, and given, after a change of coordinate in the source, by
\begin{equation}\label{eq:CProSurfMonge}
	P(z_1,z_2,v_1,v_2)=(z_1, f(z_1+v_1z_2,z_2)-v_2z_2).
\end{equation}

Here too, the algebraic conditions on the Taylor expansion of $f$ at the origin for $P_{(v_1,v_2)}$ to have one of $\mathcal A_e$-codimension $\le 2$ singularities of map-germs from the plane to the plane, and for these singularities to be $\mathcal A_e$-versally unfolded by the family $P$ in \eqref{eq:Cd2SurfMonge}, are as given in \cite[Chapter 6]{IRRT} (interpreting the coefficients in  \cite{IRRT} as complex numbers).

\vspace{0.5cm}

\noindent
{\bf Acknowledgement}:  
The authors thank Bill Bruce, Igor Mencattini, Juan Nuño Ballesteros, Toru Ohmoto and Raul Oset Sinha for valuable conversations and suggestions.

\vspace{0.5cm}
\noindent
\textbf{Declarations:}

\noindent
Funding : ADF was supported by São Paulo Research Foundation (FAPESP), process number 2023/11669-0.
FT was supported by the FAPESP Thematic project grant 2019/07316-0. 

\noindent
Conflict of interest/Competing interests : Not applicable.

\noindent
Ethics approval : Not applicable.

\noindent
Consent to participate : Not applicable.

\noindent
Consent for publication : Not applicable.

\noindent
Availability of data and materials : Not applicable.

\noindent
Code availability : Not applicable.

\noindent
Authors’ contributions : The authors contributed equally to this work.


	
\noindent
Instituto de Ciências Matemáticas e de Computação - USP, Avenida Trabalhador são-carlense, 400, Centro, CEP: 13566-590, São Carlos - SP, Brazil.
\\
E-mails: 
\\
amandafalqueto@gmail.com\\
faridtari@icmc.usp.br

\end{document}